\author{Dimitri Ara}
\dedicatory{À la mémoire de Jean-Louis Loday}
\address{Institut de Mathématiques de Jussieu, Université Paris
Diderot -- Paris 7, Case 7012, Bâtiment Chevaleret, 75205 Paris Cedex 13, France}
\email{ara@math.jussieu.fr}
\urladdr{http://people.math.jussieu.fr/~ara/}
\keywords{$\infty$-groupoïde strict, type d'homotopie, complexe de chaînes, catégorie homotopique, espace d'Eilenberg-Mac Lane}
\subjclass{18B40, \textbf{18D05}, 18E30, \textbf{18E35}, 18G35, \textbf{18G55},
55P10, \textbf{55P15}, 55Q05, 55U15, 55U25, 55U35}
\title[Sur les types d'homotopie modélisés par les $\infty$-groupoïdes
stricts]{Sur les types d'homotopie modélisés\\ par les $\infty$-groupoïdes
stricts}
\begin{document}

\let\sbigskipamount\bigskipamount
\divide\bigskipamount by 3

\begin{abstract}
L'objet de ce texte est l'étude de la classe des types d'homotopie qui sont
modélisés par les \oo-groupoïdes stricts. Nous démontrons que la catégorie
homotopique des \oo-groupoïdes simplement connexes est équivalente à la
catégorie dérivée en degré homologique $d \ge 2$ des groupes abéliens. Nous
en déduisons que les types d'homotopie simplement connexes modélisés par les
\oo-groupoïdes stricts sont exactement les produits d'espaces
d'Eilenberg-Mac Lane. Nous étudions également brièvement le cas des
$3$-catégories avec des inverses faibles. Nous terminons par deux questions
autour du problème suggéré par le titre de ce texte.
\end{abstract}

\begin{altabstract}
\noindent
The purpose of this text is the study of the class of homotopy types which
are modelized by strict \oo-groupoids. We show that the homotopy category of
simply connected \oo-groupoids is equivalent to the derived category in
homological degree $d \ge 2$ of abelian groups. We deduce that the simply
connected homotopy types modelized by strict \oo-group\-oids are precisely the
products of Eilenberg-Mac Lane spaces. We also briefly study
$3$\nobreakdash-cat\-e\-go\-ries with weak inverses. We finish by two
questions about the problem suggested by the title of this text.
\end{altabstract}

\maketitle

\tableofcontents

\let\bigskipamount\sbigskipamount

\section*{Introduction}

Dans \emph{Pursuing Stacks} (\cite{GrothPS}), Grothendieck se propose de
généraliser le fait classique suivant : le foncteur groupoïde fondamental
$\Pi_1 : \Top \to \Gpd$, de la catégorie des espaces topologiques vers la
catégorie des groupoïdes, induit une équivalence de catégories
$\overline{\Pi_1} : \Hot_1 \to \Ho(\Gpd)$ entre la catégorie des $1$-types
d'homotopie et la localisation de la catégorie $\Gpd$ par les équivalences
de groupoïdes. Il conjecture l'existence d'une structure algébrique de
\oo-groupoïdes (faibles) et d'un foncteur \oo-groupoïde fondamental qui
induirait une équivalence de catégories entre la catégorie homotopique
$\Hot$ et une localisation de la catégorie des \oo-groupoïdes (faibles)
(voir \cite{AraWGpd} pour un énoncé précis de cette conjecture).

Grothendieck est conscient du fait que les \oo-groupoïdes stricts ne
permettent pas de réaliser cette équivalence de catégories. En effet, il lui
apparaît comme évident le fait que les types d'homotopie simplement connexes
modélisés par ceux-ci sont exactement les produits d'espaces d'Eilenberg-Mac
Lane : «~At first sight it had seemed to me that the Bangor group had indeed
come to work out (quite independently) one basic intuition of the program I
had envisioned in those letters to Larry Breen.  [\ldots] But finally it
appears this is not so, they have been working throughout with a notion of
$\infty$\nobreakdash-groupoid too restrictive for the purposes I had in mind
(probably because they insist I guess on strict associativity of
compositions, rather than associativity up to a (given) isomorphism (or
rather, homotopy) -- to the effect that the simply connected homotopy types
they obtain are merely products of Eilenberg-MacLane spaces.~»

Ce résultat apparaît huit ans plus tard dans \cite{BrownHigginsGpdCC}, où il
est attribué à Loday. (Le résultat est exprimé en termes de complexes
croisés mais ceux-ci sont équivalents aux \oo-groupoïdes
stricts par \cite{BrownHigginsClass}.)

Dans ce texte, qui est essentiellement un remaniement du premier chapitre de
la thèse~\cite{AraThesis} de l'auteur, nous présentons une preuve
élémentaire de ce résultat, sans doute plus proche de celle que Grothendieck
avait en tête. Voici comment s'articule cette preuve. L'argument de
Eckmann-Hilton montre qu'un \oo-groupoïde strict avec un unique objet et une
unique $1$-flèche (on dira qu'un tel \oo-groupoïde est $1$-réduit) est
canoniquement un \oo-groupoïde strict en groupes abéliens.  Par un théorème
de Bourn (\cite{Bourn}), la donnée d'un tel \oo-groupoïde est équivalente à
celle d'un complexe de groupes abéliens en degré homologique $d \ge 2$. On
en déduit facilement que la catégorie homotopique des \oo-groupoïdes stricts
$1$-réduits est canoniquement équivalente à la catégorie dérivée $D_{\ge
2}(\Ab)$ de groupes abéliens en degré homologique $d \ge 2$. En utilisant un
résultat classique sur les catégories dérivées, on obtient que tout
\oo-groupoïde strict $1$-réduit est faiblement équivalent à un produit de
\oo-groupoïdes d'Eilenberg-Mac Lane. Puisque tout \oo-groupoïde simplement
connexe est faiblement équivalent à un \oo-groupoïde strict $1$-réduit, on
obtient facilement que tout foncteur de réalisation topologique raisonnable
envoie un \oo-groupoïde strict simplement connexe sur un produit d'espaces
d'Eilenberg-Mac Lane.

En fait, nous obtenons un résultat plus précis. La catégorie homotopique des
\oo-grou\-poï\-des simplement connexes est équivalente, \emph{via} un zigzag
canonique, à la catégorie dérivée $\D_{\ge 2}(\Ab)$. Ce résultat est une
conséquence facile des considérations précédentes et de l'existence de la
structure de catégorie de modèles de Brown-Golasi\'nski sur la catégorie des
\oo-groupoïdes stricts (voir \cite{BrownGolas} et \cite{AraMetWGpd}). Nous
avons pris soin de marquer clairement les résultats qui utilisent cette
structure afin d'insister sur le fait que le résultat principal n'en dépend
pas.

Une brève section de ce texte est consacrée aux \oo-groupoïdes
quasi-stricts, c'est-à-dire aux \oo-catégories admettant des inverses
faibles. Cette notion a été introduite par \hbox{Kapranov} et Voedvosky dans
\cite{KapVoe}. Les deux auteurs étaient convaincus que les \oo-groupoïdes
quasi-stricts modélisaient les types d'homotopie. Il n'en est en fait rien,
comme l'a démontré Simpson dans \cite{Simpson} (voir également le chapitre 4
de \cite{SimpsonHTHC}). Nous exposons dans ce texte un argument alternatif.
Nous montrons qu'il résulte de considérations bien connues que tout
$3$-groupoïde quasi-strict est faiblement équivalent, \emph{via} un
\emph{pseudo-foncteur}, à un $3$-groupoïde strict, et donc que les $3$-types
d'homotopie simplement connexes modélisés par les $3$-groupoïdes stricts, ou
quasi-stricts, sont les mêmes. On retrouve ainsi le résultat de Simpson, à
savoir que le $3$-type de la sphère de dimension $2$ n'est pas modélisé par
un $3$-groupoïde quasi-strict.

Nous insistons sur le fait que ce texte ne répond pas à la question suggérée
par son titre. En effet, la détermination de la classe des types d'homotopie
modélisés par les \oo-groupoïdes stricts reste à notre connaissance ouverte.
Il en est de même de la question analogue pour les \oo-groupoïdes
quasi-stricts. En particulier, il n'est pas à notre connaissance connu si
ceux-ci modélisent strictement plus de types d'homotopie que les
\oo-groupoïdes stricts.

Notre texte est organisé de la manière suivante. Dans la première section,
nous rappelons la définition de \oo-groupoïde strict et fixons la
terminologie. Dans la seconde section, nous définissons les groupes
d'homotopie des \oo-groupoïdes stricts, ainsi que leurs équivalences
faibles. Nous donnons différentes caractérisations de ces équivalences
faibles. La troisième section est consacrée à la catégorie homotopique. On y
définit notamment les notions de type d'homotopie et de $n$-type
d'homotopie. La quatrième section est le coeur de ce texte. On y démontre,
selon la stratégie exposée ci-dessus, que la catégorie homotopique des
\oo-groupoïdes simplement connexes est canoniquement équivalente à la
catégorie dérivée $\D_{\ge 2}(\Ab)$. On définit une notion de \oo-groupoïde
d'Eilenberg-Mac Lane et on montre que tout \oo-groupoïde simplement connexe
est faiblement équivalent à un produit de \oo-groupoïdes d'Eilenberg-Mac
Lane. Dans la cinquième section, on définit une notion de foncteur de
réalisation de Simpson et on démontre que les types d'homotopie simplement
connexes modélisés par les \oo-groupoïdes stricts, \emph{via} un tel
foncteur de réalisation, sont exactement les produits d'espaces
d'Eilenberg-Mac Lane. On en déduit qu'aucune sphère de dimension $n \ge 2$
n'est modélisée par un \oo-groupoïde strict. Dans la sixième section, on
définit les \oo-groupoïdes quasi-stricts.  On explique qu'il résulte de
considérations bien connues que tout $3$-groupoïde quasi-strict est
faiblement équivalent, \emph{via} un \emph{pseudo-foncteur}, à un
$3$-groupoïde strict, et donc que les $3$-types d'homotopie simplement
connexes modélisés par les $3$-groupoïdes stricts, ou quasi-stricts, sont
les mêmes. Enfin, dans la dernière section, nous détaillons les deux
questions exposées ci-dessus.

Si $C$ est une catégorie et
\[
\xymatrix@C=1pc@R=1pc{
X_1 \ar[dr]_{f_1} & & X_2 \ar[dl]^{g_1} \ar[dr]_{f_2} & &  \cdots & & X_n
\ar[dl]^{g_{n-1}} \\
& Y_1 & & Y_2 & \cdots & Y_{n-1}
}
\]
est un diagramme dans $C$, nous noterons
\[ (X_1, f_1) \times_{Y_1} (g_1, X_2, f_2) \times_{Y_2} \dots
\times_{Y_{n-1}} (g_{n-1}, X_n) \]
sa limite projective.

\section{$\infty$-groupoïdes stricts}

\begin{paragr}
La catégorie des globes $\G$ est la catégorie engendrée
par le graphe
\[
\xymatrix{
\Dn{0} \ar@<.6ex>[r]^-{\Ths{1}} \ar@<-.6ex>[r]_-{\Tht{1}} &
\Dn{1} \ar@<.6ex>[r]^-{\Ths{2}} \ar@<-.6ex>[r]_-{\Tht{2}} &
\cdots \ar@<.6ex>[r]^-{\Ths{i-1}} \ar@<-.6ex>[r]_-{\Tht{i-1}} &
\Dn{i-1} \ar@<.6ex>[r]^-{\Ths{i}} \ar@<-.6ex>[r]_-{\Tht{i}} &
\Dn{i} \ar@<.6ex>[r]^-{\Ths{i+1}} \ar@<-.6ex>[r]_-{\Tht{i+1}} &
\dots
}
\]
soumis aux relations coglobulaires
\[\Ths{i+1}\Ths{i} = \Tht{i+1}\Ths{i}\quad\text{et}\quad\Ths{i+1}\Tht{i} =
\Tht{i+1}\Tht{i}, \qquad i \ge 1.\]

La catégorie des \ndef{\oo-graphes} ou \ndef{ensembles globulaires} est la
catégorie des préfaisceaux sur $\G$. Si $X$ est un \oo-graphe, on notera
$X_n$ l'ensemble $X(\Dn{n})$ et $\Gls{i}$ (resp. $\Glt{i}$)
l'application $X(\Ths{i})$ (resp. $X(\Tht{i})$). Ainsi, la donnée
de $X$ équivaut à celle d'un diagramme d'ensembles
\[
\xymatrix{
\cdots \ar@<.6ex>[r]^-{\Gls{n+1}} \ar@<-.6ex>[r]_-{\Glt{n+1}} &
X_{n} \ar@<.6ex>[r]^-{\Gls{n}} \ar@<-.6ex>[r]_-{\Glt{n}} &
X_{n-1} \ar@<.6ex>[r]^-{\Gls{n-1}} \ar@<-.6ex>[r]_-{\Glt{n-1}} &
\cdots \ar@<.6ex>[r]^-{\Gls{2}} \ar@<-.6ex>[r]_-{\Glt{2}} &
X_1 \ar@<.6ex>[r]^-{\Gls{1}} \ar@<-.6ex>[r]_-{\Glt{1}} &
X_0
}
\]
soumis aux relations globulaires
\[\Gls{i}\Gls{i+1} = \Gls{i}\Glt{i+1}\quad\text{et}\quad\Glt{i}\Gls{i+1} =
\Glt{i}\Glt{i+1}, \qquad i \ge 1.\]
Si $X$ est un ensemble globulaire et $i, j$ sont deux entiers tels que $i
\ge j \ge 0$, on définit des applications 
$\Gls[j]{i}, \Glt[j]{i} : X_i \to X_j$ en posant
\[
\Gls[j]{i} = \Gls{j+1}\cdots\Gls{i-1}\Gls{i}
\quad\text{et}\quad
\Glt[j]{i} = \Glt{j+1}\cdots\Glt{i-1}\Glt{i}.
\]

Si $X$ est un ensemble globulaire et $n$ est un entier positif, on appellera
$X_n$ l'ensemble des \ndef{$n$-flèches} de $X$. Si $n = 0$, on appellera
également $X_0$ l'ensemble des \ndef{objets} de $X$. Si $u$ est une $n$-flèche
pour $n \ge 1$, on appellera \ndef{source} (resp. \ndef{but}) de $u$, la
$(n-1)$-flèche $\Gls{n}(u)$ (resp.  $\Glt{n}(u)$).  Pour indiquer qu'une flèche
$u$ a pour source $x$ et pour but $y$, on écrira $u : x \to y$. On dira que
deux $n$-flèches $u$ et $v$ sont \ndef{parallèles} si, ou bien $n = 0$, ou bien
$n \ge 1$ et $u, v$ ont même source et même but.
\end{paragr}

\begin{paragr}\label{paragr:def_w}
Une \ndef{\oo-précatégorie} est un \oo-graphe $X$ muni
d'applications
\[
\begin{split}
  \comp_j^i & : (X_i, \Gls[j]{i}) \times_{X_j} (\Glt[j]{i}, X_i) \to
      X_i,\quad i > j \ge 0,
      \\
  \Glk{i} & : X_i \to X_{i + 1}, \quad i \ge 0,
\end{split}
\]
vérifiant les axiomes suivants :
\begin{enumerate}
\item pour tout couple $(v, u)$ dans $(X_i, \Gls[j]{i}) \times_{X_j}
  (\Glt[j]{i}, X_i)$ avec $i > j \ge 0$, on a
  \[
  \Gls{i}(v \comp_j^i u) = 
  \begin{cases}
    \Gls{i}(u), & j = i - 1 \\
    \Gls{i}(v) \comp_j^{i-1} \Gls{i}(u), & j < i - 1
  \end{cases} ;
  \]
\item pour tout couple $(v, u)$ dans $(X_i, \Gls[j]{i}) \times_{X_j}
  (\Glt[j]{i}, X_i)$ avec $i > j \ge 0$, on a
  \[
  \Glt{i}(v \comp_j^i u) = 
  \begin{cases}
    \Glt{i}(v), & j = i - 1 \\
    \Glt{i}(v) \comp_j^{i-1} \Glt{i}(u), & j < i - 1
  \end{cases} ;
  \]
\item pour tout $u$ dans $X_i$ avec $i \ge 0$, on a
  \[
  \Gls{i+1}\Glk{i}(u) = u = \Glt{i+1}\Glk{i}(u), \qquad i \ge 0.
  \]
\end{enumerate}
Pour $i$ et $j$ deux entiers tels que $i \ge j \ge 0$, on notera
\[ \Glk[i]{j} = \Glk{i-1}\cdots\Glk{j+1}\Glk{j}. \]
Un \ndef{morphisme de \oo-précatégories} est un morphisme d'ensembles
globulaires qui respecte les applications $\comp_i^j$ et $\Glk{i}$.

Une \oo-précatégorie $X$ est une \ndef{\oo-catégorie stricte}
si elle satisfait en outre les axiomes suivants :
\begin{enumerate}[(C1)]
  \item Associativité \\
     pour tout triplet $(w, v, u)$ dans
    $(X_i, \Gls[j]{i}) \times_{X_j} (\Glt[j]{i}, X_i,
    \Gls[j]{i}) \times_{X_j} (\Glt[j]{i}, X_i)$ avec $i > j \ge 0$,
    on a
    \[ (w \comp_j^i v) \comp_j^i u = w \comp_j^i (v \comp_j^i u) \text{;} \]
  \item Loi d'échange\\
   pour tout quadruplet $(\delta, \gamma, \beta, \alpha)$ dans 
    \[ (X_i, \Gls[j]{i}) \times_{X_j} (\Glt[j]{i}, X_i, \Gls[k]{i})
    \times_{X_k} (\Glt[k]{i}, X_i, \Gls[j]{i}) \times_{X_j} (\Glt[j]{i},
    X_i),\]
    avec $i > j > k \ge 0$, on a
    \[ (\delta \comp^i_j \gamma) \comp^i_k (\beta \comp^i_j \alpha) = (\delta \comp^i_k \beta)
    \comp^i_j ( \gamma \comp^i_k \alpha) \text{;} \]
  \item Unités\\
   pour tout $u$ dans $X_i$ avec $i \ge 1$ et tout $j$ tel que $i > j \ge
   0$, on a
    \[ u \comp_j^i \Glk[i]{j}\Gls[j]{i}(u) = u = \Glk[i]{j}\Glt[j]{i}(u)
    \comp_j^i u \text{;} \]
  \item Fonctorialité des unités \\
   pour tout couple 
    $(v, u)$ dans $(X_i, \Gls[j]{i}) \times_{X_j} (\Glt[j]{i} ,X_i)$
    avec $i > j \ge 0$, on a
    \[ \Glk{i}(v \comp_j^i u) = \Glk{i}(v) \comp_j^{i+1} \Glk{i}(u). \]
\end{enumerate}
On notera $\wcat$ la sous-catégorie pleine de la catégorie des
\oo-précatégories dont les objets sont les \oo-catégories strictes.
\end{paragr}

\begin{paragr}
Soient $C$ une \oo-catégorie stricte, $u$ une $i$-flèche pour $i \ge 1$ et $j$ un
entier tel que $0 \le j < i$. On dit que $u$ \ndef{admet un
$\comp^i_j$-inverse} s'il existe une $i$-flèche $v$ telle que
\[
\Gls[j]{i}(v) = \Glt[j]{i}(u), \quad
\Glt[j]{i}(v) = \Gls[j]{i}(u), \quad
u \comp_j^i v = \Glk[i]{j}(\Glt[j]{i}(u))
\quad\text{et}\quad
v \comp_j^i u = \Glk[i]{j}(\Gls[j]{i}(u)).
\]
Le même argument qu'en théorie des groupes montre que si un tel $v$ existe,
il est unique. On appelle alors $v$ le \ndef{$\comp^i_j$-inverse} de $u$.

On dit qu'une \oo-catégorie stricte $C$ est un \ndef{\oo-groupoïde strict}
si pour tous entiers $i, j$ tels que $0 \le j < i$,  toute $i$-flèche de $C$
admet un $\comp^i_j$-inverse. On notera $\wgpd$ la sous-catégorie pleine de
$\wcat$ dont les objets sont les \oo-groupoïdes stricts.
\end{paragr}

\begin{prop}
Soit $C$ une \oo-catégorie stricte. Les assertions suivantes sont
équivalentes :
\begin{enumerate}
\item $C$ est un \oo-groupoïde strict ;
\item $C$ admet des $\comp^i_{i-1}$-inverses pour tout $i \ge 1$ ;
\item $C$ admet des $\comp^i_0$-inverses pour tout $i \ge 1$ ;
\item pour tout $i \ge 1$, il existe un entier $j$ tel que $0 \le j < i$
  pour lequel $C$ admet des $\comp^i_j$-inverses.
\end{enumerate}
\end{prop}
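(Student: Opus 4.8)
The plan is to prove the cycle of implications $(1)\Rightarrow(2)\Rightarrow(4)$ and $(1)\Rightarrow(3)\Rightarrow(4)$ trivially, and then to establish the one genuine implication $(4)\Rightarrow(1)$; the converses $(2)\Rightarrow(1)$ and $(3)\Rightarrow(1)$ are then special cases of $(4)\Rightarrow(1)$. Concretely, $(1)$ means $C$ admits $\comp^i_j$-inverses for \emph{all} $0\le j<i$, so it immediately implies each of $(2)$, $(3)$, and $(4)$, which are successively weaker assertions (the implications $(2)\Rightarrow(4)$ and $(3)\Rightarrow(4)$ being immediate since $(4)$ only requires \emph{some} admissible $j$ for each $i$). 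Thus everything reduces to showing that if, for each $i\ge1$, there is \emph{one} integer $j$ with $0\le j<i$ for which all $i$-arrows have a $\comp^i_j$-inverse, then in fact all $i$-arrows have a $\comp^i_k$-inverse for \emph{every} $k$ with $0\le k<i$.

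The key step is a downward and upward induction on the dimension producing $\comp^i_k$-inverses from $\comp^i_j$-inverses. Fix $i$ and suppose $C$ has $\comp^i_j$-inverses. I would first treat the case $k>j$: given an $i$-arrow $u$, its $\comp^i_j$-inverse $v$ satisfies $\Gls[j]{i}(v)=\Glt[j]{i}(u)$, etc., but $v$ is \emph{not} parallel to $u$ in the sense needed for a $\comp^i_k$-inverse when $k>j$; instead one must use the unit/exchange axioms to ``localize''. The cleaner route is the classical trick from the $2$-dimensional case: if $u$ has a $\comp^i_j$-inverse then so does $\Glk[i']{i}(u)$ for $i'>i$, and one shows that $\comp^i_j$-invertibility together with $\comp^{i}_{k}$-invertibility at one level forces it at neighboring levels via the exchange law $(C2)$. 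Specifically, I would prove the lemma: \emph{if $C$ has $\comp^{i}_{j}$-inverses and $\comp^{i}_{j'}$-inverses for two consecutive values, one can interpolate}, and separately the ``staircase'' lemma that $\comp^i_{i-1}$-invertibility of all $i$-arrows propagates to $\comp^{i+1}_{i}$-invertibility by whiskering and using that inverses of identities are identities. The honest statement I would isolate and prove first is: \textbf{if every $i$-arrow of $C$ (for all $i$) has a $\comp^i_{i-1}$-inverse, then $C$ is a strict $\infty$-groupoid}, i.e.\ $(2)\Rightarrow(1)$, by induction on $i-k$ using the Eckmann--Hilton-style manipulation: $u\comp^i_k(-)$ can be expressed through $\comp^i_{i-1}$ after precomposing with suitable identities $\Glk[i]{\,\cdot\,}$, so a $\comp^i_{i-1}$-inverse of a whiskered arrow yields the desired $\comp^i_k$-inverse.

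For $(4)\Rightarrow(1)$ in general, the argument runs: by the axioms $(C1)$--$(C4)$, for any $k<i$ and any $i$-arrow $u$, the arrow $u$ is $\comp^i_k$-invertible if and only if a certain whiskering of $u$ by identities is $\comp^i_{i-1}$-invertible; and $\comp^i_{i-1}$-invertibility of all $i$-arrows follows from $\comp^i_j$-invertibility of all $i$-arrows for any single $j$, because one can realize the $\comp^i_{i-1}$-composite as a $\comp^i_j$-composite of identity-padded arrows (using fonctorialité des unités $(C4)$ and the formulas for sources/buts of composites in axioms (1),(2) of a $\infty$-precategory). So the skeleton is: \emph{single $j$} $\Rightarrow$ \emph{$j=i-1$} $\Rightarrow$ \emph{all $k$}. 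The main obstacle is the bookkeeping in the first of these two reductions — passing from an arbitrary $j$ to $j=i-1$ — since it requires writing an $i$-arrow's $\comp^i_{i-1}$-composites in terms of $\comp^i_j$-composites of whiskered arrows and checking the four defining identities of a $\comp^i_{i-1}$-inverse using the exchange law $(C2)$ and functoriality of units; I expect this to be the one place where the verification is not completely formal, though each individual identity is a routine (if tedious) manipulation of the axioms. The uniqueness of inverses, already noted in the text, guarantees that the inverses produced at different stages are coherent, so no compatibility needs to be checked separately.
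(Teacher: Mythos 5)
Your reduction to the single implication $(4) \Rightarrow (1)$ is the right skeleton, but the mechanism you propose for the crucial step --- transferring, at a fixed level $i$, invertibility from one composition $\comp^i_j$ to another $\comp^i_k$ --- is not correct, and this is exactly where the content of the proposition lies. You claim that this transfer can be done with the unit and exchange axioms alone, by ``realizing the $\comp^i_{i-1}$-composite as a $\comp^i_j$-composite of identity-padded arrows'' and by the equivalence ``$u$ is $\comp^i_k$-invertible iff a suitable whiskering of $u$ by identities is $\comp^i_{i-1}$-invertible'', together with a ``staircase lemma'' asserting that $\comp^i_{i-1}$-invertibility of all $i$-cells propagates to $\comp^{i+1}_{i}$-invertibility. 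These statements are false. For the staircase lemma, a $2$-category with one object, one $1$-cell and a commutative monoid of $2$-cells which is not a group has all $1$-cells $\comp^1_0$-invertible but not all $2$-cells $\comp^2_1$-invertible. For the fixed-level transfer, take a category which is not a groupoid, viewed as a $2$-category with only identity $2$-cells: every $2$-cell is $\comp^2_1$-invertible, yet the identity $2$-cell of a non-invertible $1$-cell has no $\comp^2_0$-inverse (its existence would force the $1$-cell to be invertible). Hence no argument using only units, exchange and identity padding can pass from $\comp^i_{i-1}$-invertibility to $\comp^i_k$-invertibility with $k<i-1$ at a fixed level; likewise the Eckmann--Hilton manipulation you invoke is only available in the $1$-réduit situation, where the two compositions share their units, which is not the case here.

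The missing ingredient is the invertibility of the \emph{lower-dimensional} cells, which must be organized by an induction on the dimension and then exploited by \emph{conjugation}. This is how the paper argues: by induction it suffices to show that, for $i>j>k\ge 0$, if $C$ has $\comp^j_k$-inverses then $C$ has $\comp^i_k$-inverses if and only if it has $\comp^i_j$-inverses; regarding the $2$-graph $C_i \rightrightarrows C_j \rightrightarrows C_k$ as a $2$-category reduces this to the case $(i,j,k)=(2,1,0)$, namely the lemma that in a $2$-category \emph{whose $1$-cells are invertible} a $2$-cell $\alpha : u \to v$ is horizontally invertible iff it is vertically invertible, the inverse in each direction being obtained by conjugating with $u$, $v$ or their inverses ($v \comp_0 \alpha^{\ast} \comp_0 u$, resp.\ $v^{-1} \comp_0 \alpha^{-1} \comp_0 u^{-1}$). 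The hypothesis on the $1$-cells is precisely what your identity-padding scheme tries to do without and cannot; in the inductive proof of $(4)\Rightarrow(1)$ it is available because the induction hypothesis already provides all inverses in dimensions $<i$. Your remark on the uniqueness of inverses is correct but does not repair this gap.
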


\begin{proof}
Par récurrence, il suffit de montrer que pour tous $i > j > k \ge 0$, si $C$
admet des $\comp^j_k$-inverses, alors $C$ admet des $\comp^i_k$-inverses si
et seulement si $C$ admet des $\comp^i_j$-inverses. En utilisant le fait
que le $2$-graphe
\[
\xymatrix{
C_i \ar@<.6ex>[r]^-{\Gls[j]{i}} \ar@<-.6ex>[r]_-{\Glt[j]{i}} &
C_j \ar@<.6ex>[r]^-{\Gls[k]{j}} \ar@<-.6ex>[r]_-{\Glt[k]{j}} &
C_k
}
\]
est naturellement muni d'une structure de $2$-catégorie, on se ramène au cas
$k = 0$, $j = 1$ et $i = 2$. Le résultat est alors une conséquence du lemme
suivant.
\end{proof}

\begin{lemme}
Soit $C$ une $2$-catégorie dont les $1$-flèches sont inversibles. Alors une
$2$\nobreakdash-flèches de $C$ est inversible pour la composition horizontale
(c'est-à-dire $\comp^2_0$) si et seulement si elle est inversible pour la
composition verticale (c'est-à-dire $\comp^2_1$).
\end{lemme}

\begin{proof}
Soit $\alpha : u \to v$ une $2$-flèche de $C$. Si $\alpha$ admet un inverse
$\alpha^\ast$ pour la composition horizontale, alors on vérifie immédiatement que
$v \comp_0 \alpha^\ast \comp_0 u$ est un inverse pour la composition verticale.
Réciproquement, si $\alpha$ admet un inverse~$\alpha^{-1}$ pour la composition
verticale, alors $v^{-1} \comp_0 \alpha^{-1} \comp_0 u^{-1}$ est un inverse pour
la composition horizontale.
\end{proof}

\section{Équivalences faibles de $\infty$-groupoïdes stricts}
\label{section:equiv_faible_wgpd}

\begin{paragr}\label{paragr:def_equiv_faible_wgpd}
Soient $G$ un \oo-groupoïde strict et $u, v$ deux $n$-flèches pour $n \ge
0$. Une homotopie de $u$ vers $v$ est une $(n+1)$-flèche $u \to v$. Si une
telle homotopie existe, on dira que $u$ est homotope à $v$ et on écrira
$u \tildeh[n] v$. Il est immédiat que si $u$ est homotope à $v$, alors $u$
et $v$ sont parallèles.
\end{paragr}

\begin{lemme}
Soit $G$ un \oo-groupoïde strict. Pour tout $n \ge 0$, la relation
$\tildeh[n]$ est une relation d'équivalence. De plus, si $n \ge 1$, cette
relation est compatible avec la composition $\comp^n_{n-1}$.
\end{lemme}

\begin{proof}
Soit $u$ une $n$-flèche de $G$. La $(n+1)$-flèche
$\Glk{n}(u)$ est une homotopie de $u$ vers $u$.
La relation $\tildeh[n]$ est donc réflexive.

Soit maintenant $v$ une deuxième $n$-flèche de $G$ et soit $h : u \to v$ une homotopie.
La $(n+1)$-flèche $\Glw{n+1}(h)$ est une homotopie de $v$ vers $u$. 
La relation $\tildeh[n]$ est donc symétrique.

Soit maintenant $w$ une troisième $n$-flèche de $G$ et $k : v \to w$ une
deuxième homotopie.  La $(n+1)$-flèche $k \comp^{n+1}_n h$ est une homotopie de
$u$ vers $w$.  La relation $\tildeh[n]$ est donc transitive.

Enfin, soit
\[
\UseAllTwocells
\xymatrix@C=3pc{
\rtwocell^u_{u'}{\,h}
&
\rtwocell^v_{v'}{\,k}
&
}
\]
un diagramme dans $G$, où les flèches simples sont des $n$-flèches avec $n \ge
1$ et les flèches doubles sont des $(n+1)$-flèches. La $(n+1)$-flèche
$k \comp^{n+1}_{n-1} h$ est une homotopie de
$v \comp^{n+1}_n u$ vers $v' \comp^{n+1}_n u'$. La relation $\tildeh[n]$
est donc compatible à la composition $\comp^n_{n-1}$.
\end{proof}

\begin{paragr}
Soit $G$ un \oo-groupoïde strict. Pour $n \ge 0$, on notera $\overline{G_n}$
le quotient de $G_n$ par la relation d'équivalence $\tildeh[n]$.

Fixons maintenant $n \ge 1$. Les applications
\[
 \Gls{n}, \Glt{n} : G_n \to G_{n-1},
 \quad
 \Glk{n-1} : G_{n-1} \to G_n,
\]
induisent des applications
\[
 \Gls{n}, \Glt{n} : \overline{G_n} \to G_{n-1},
 \quad
 \Glk{n-1} : G_{n-1} \to \overline{G_n}.
\]
De plus, par le lemme ci-dessus, l'application
\[ \comp^n_{n-1} : G_n \times_{G_{n-1}} G_n \to G_n \]
induit une application
\[
 \comp^n_{n-1} : \overline{G_n} \times_{G_{n-1}} \overline{G_n} \to
 \overline{G_n}.
\]
Il est immédiat que le graphe
\[
\xymatrix{
\overline{G_n} \ar@<.6ex>[r]^-{\Gls{n}} \ar@<-.6ex>[r]_-{\Glt{n}} &
G_{n-1},
}
\]
muni des applications
\[
\comp^n_{n-1} : \overline{G_n} \times_{G_{n-1}} \overline{G_n} \to
\overline{G_n}
\quad\text{et}\quad
\Glk{n-1} : G_{n-1} \to \overline{G_n}
\]
définit un groupoïde. Nous noterons $\PI_n(G)$ ce groupoïde.

Il est immédiat que $\PI_n$ définit un foncteur de la catégorie des
\oo-groupoïdes stricts vers la catégorie des groupoïdes.
\end{paragr}

\begin{paragr}
Soit $G$ un \oo-groupoïde strict. L'ensemble des \ndef{composantes connexes}
de $G$ est l'ensemble
\[ \pi_0(G) = \pi_0(\PI_1(G)) = \overline{G_0}. \]

Soient $n \ge 1$ et $u, v$ deux $(n - 1)$-flèches parallèles de $G$. On notera
\[
\pi_n(G, u, v) = \Hom_{\PI_n(G)}(u, v)
\quad\text{et}\quad
\pi_n(G, u) = \pi_n(G, u, u).
\]
La composition de $\PI_n(G)$ induit une structure de groupe sur $\pi_n(G, u)$.

Si $n \ge 1$ et $x$ est un objet de $G$, le \ndef{$n$-ième groupe d'homotopie}
de $(G, x)$ est le groupe
\[ \pi_n(G, x) = \pi_n(\Glk[n-1]{0}(x)). \]
L'argument de Eckmann-Hilton montre que ce groupe est abélien dès que $n \ge
2$.

On déduit de la fonctorialité de $\PI_n$ que :
\begin{itemize}
\item $\pi_0$ définit un foncteur de la catégorie des \oo-groupoïdes stricts
vers la catégorie des ensembles ;
\item pour $n \ge 1$, $\pi_n$ définit un foncteur de la catégorie des \oo-groupoïdes stricts munis
d'une $(n-1)$-flèche (ou d'un objet) vers la catégorie des groupes.
\end{itemize}
\end{paragr}

\begin{paragr}
Soit $f : G \to H$ un morphisme de \oo-groupoïdes stricts. On dira que
$f$ est une \ndef{équivalence faible de \oo-groupoïdes stricts} si
\begin{itemize}
\item l'application $\pi_0(f) : \pi_0(G) \to \pi_0(H)$ est une bijection ;
\item pour tout $n \ge 1$ et tout objet $x$ de $G$, le morphisme
\hbox{$\pi_n(f, x) : \pi_n(G, x) \to \pi_n(H, f(x))$} est un isomorphisme.
\end{itemize}
\end{paragr}

\begin{prop}\label{prop:eqf_defs}
Soit $f : G \to H$ un morphisme de \oo-groupoïdes stricts. Les
conditions suivantes sont équivalentes :
\begin{enumerate}
\item $f$ est une équivalence faible ;
\item l'application $\pi_0(f) : \pi_0(G) \to \pi_0(H)$ est une bijection,
et pour tout $n \ge 1$ et toute $(n-1)$-flèche $u$ de $G$,
le morphisme $f$ induit une bijection
\[ \pi_n(G, u) \to \pi_n(H, f(u))\text{ ;} \]
\item le foncteur $\PI_1(f) : \PI_1(G) \to \PI_1(H)$ est une équivalence de catégories, 
et pour tout $n \ge 2$ et tout couple $(u, v)$ de $(n-1)$-flèches parallèles
de $G$, le morphisme $f$ induit une bijection
\[ \pi_n(G, u, v) \to \pi_n(H, f(u), f(v))\text{ ;} \]
\item le foncteur $\PI_1(f) : \PI_1(G) \to \PI_1(H)$ est plein et essentiellement surjectif, et pour
tout $n \ge 2$ et tout couple $(u, v)$ de $(n-1)$-flèches parallèles de $G$,
le morphisme $f$ induit une surjection 
\[ \pi_n(G, u, v) \to \pi_n(H, f(u), f(v)). \]
\end{enumerate}
\end{prop}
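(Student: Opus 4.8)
The plan is to prove the cycle $(1)\Rightarrow(2)\Rightarrow(3)\Rightarrow(4)\Rightarrow(1)$; the implication $(2)\Rightarrow(1)$ is immediate, since an object $x$ is the $(n-1)$-arrow $\Glk[n-1]{0}(x)$ and $\pi_n(G,x)=\pi_n(G,\Glk[n-1]{0}(x))$ by definition. The workhorse is an elementary \emph{transport principle} for a strict \oo-groupoid $G$: for $n\ge1$ and parallel $(n-1)$-arrows $u,v$, composition with the class of a given $n$-arrow $h\colon u\to v$ and with its inverse induce mutually inverse bijections $\pi_n(G,u,v)\simeq\pi_n(G,v)$; and for $0\le k<n-1$ and any $(n-1)$-arrow $w$ composable with $u$ (hence with $v$) along $\comp^{n-1}_k$, whiskering by $w$ and by its $\comp^{n-1}_k$-inverse induce mutually inverse bijections $\pi_n(G,u,v)\simeq\pi_n(G,\,w\comp^{n-1}_k u,\,w\comp^{n-1}_k v)$ (the exchange law, the unit axioms and the functoriality of units serve to check that these two whiskerings are inverse to one another). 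All these bijections are natural in $G$, hence compatible with any morphism $f$. Iterating the second kind of bijection, using the relevant $\comp^{n-1}_k$-inverses, one connects an arbitrary $(n-1)$-arrow $u$ to $\Glk[n-1]{0}(x)$ with $x=\Gls[0]{n-1}(u)$, and one normalizes a parallel pair $(u,v)$ — by whiskering on one side with the $\comp^{n-1}_{n-2}$-inverse of $u$ — to a pair $\bigl(\Glk[n-1]{n-2}(p),z\bigr)$ with $p=\Gls[n-2]{n-1}(u)$ an $(n-2)$-arrow and $z$ an $(n-1)$-arrow from $p$ to $p$; both normalizations are preserved by $f$, since $f$ preserves composition and $\comp^{n-1}_k$-inverses.

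For $(1)\Rightarrow(2)$: transporting along the chain connecting an $(n-1)$-arrow $u$ to $\Glk[n-1]{0}(x)$ produces a commutative square identifying $\pi_n(f,u)$ with $\pi_n(f,x)$, an isomorphism by hypothesis; together with the bijectivity of $\pi_0(f)$, this is $(2)$. For $(2)\Rightarrow(3)$: the hypotheses that $\pi_0(f)$ is bijective and that $\pi_1(f,x)$ is an isomorphism for every object $x$ are, by the standard characterization of equivalences of groupoids (a functor of groupoids is an equivalence if and only if it is bijective on $\pi_0$ and an isomorphism on every automorphism group), equivalent to $\PI_1(f)$ being an equivalence. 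For $n\ge2$ and a parallel pair $(u,v)$: if $\pi_n(G,u,v)\neq\emptyset$, a choice of $h\colon u\to v$ and the transport principle reduce the bijectivity of $\pi_n(G,u,v)\to\pi_n(H,f(u),f(v))$ to that of $\pi_n(f,v)$, which holds by $(2)$; if $\pi_n(G,u,v)=\emptyset$, one must show $\pi_n(H,f(u),f(v))=\emptyset$, i.e. that $f(u)\tildeh[n-1]f(v)$ forces $u\tildeh[n-1]v$. After the normalization above this reduces to the assertion that, for an $(n-1)$-arrow $z$ from $p$ to $p$, $f(z)\tildeh[n-1]\Glk[n-1]{n-2}(f(p))$ implies $z\tildeh[n-1]\Glk[n-1]{n-2}(p)$; but $z\tildeh[n-1]\Glk[n-1]{n-2}(p)$ says exactly that the class of $z$ is the unit of the group $\pi_{n-1}(G,p)$, and $\pi_{n-1}(f,p)$ is injective by $(2)$, which settles it.

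The implication $(3)\Rightarrow(4)$ is trivial. For $(4)\Rightarrow(1)$: first $\PI_1(f)$ is faithful, for if $a,b\colon x\to y$ are $1$-arrows of $G$ with $f(a)\tildeh[1]f(b)$, then $\pi_2(H,f(a),f(b))\neq\emptyset$, so by the surjectivity in $(4)$ at $n=2$ the set $\pi_2(G,a,b)$ is nonempty, i.e. $a\tildeh[1]b$; being also full and essentially surjective, $\PI_1(f)$ is an equivalence, whence $\pi_0(f)$ is bijective and $\pi_1(f,x)$ is an isomorphism for every object $x$. For $n\ge2$ and an object $x$, taking $u=v=\Glk[n-1]{0}(x)$ in $(4)$ shows $\pi_n(f,x)$ is surjective; it is also injective, for if $\alpha\in\pi_n(G,x)$ is the class of an $n$-arrow $a\colon\Glk[n-1]{0}(x)\to\Glk[n-1]{0}(x)$ with $f(a)\tildeh[n]\Glk[n]{0}(f(x))$, then $\pi_{n+1}\bigl(H,f(a),f(\Glk[n]{0}(x))\bigr)\neq\emptyset$, so by the surjectivity in $(4)$ at level $n+1$ applied to the parallel pair $\bigl(a,\Glk[n]{0}(x)\bigr)$ of $n$-arrows, the set $\pi_{n+1}\bigl(G,a,\Glk[n]{0}(x)\bigr)$ is nonempty, i.e. $a\tildeh[n]\Glk[n]{0}(x)$ and $\alpha$ is the unit. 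Hence $\pi_n(f,x)$ is an isomorphism for all $n\ge1$ and all objects $x$, which is $(1)$.

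The main obstacle is not conceptual but organizational: setting up the transport principle and, above all, checking that the normalization chains are natural in $G$ — i.e. that whiskering by a $\comp^{n-1}_k$-inverse and the induced identifications of the sets $\pi_n(G,-,-)$ commute with morphisms of strict \oo-groupoids. This rests on the exchange law (C2), the unit axioms (C3), the functoriality of units (C4) and the uniqueness of $\comp^i_j$-inverses, and is entirely routine once formulated. The one idea that does real work, used both in $(2)\Rightarrow(3)$ and in $(4)\Rightarrow(1)$, is that a homotopy relation $u\tildeh[n-1]v$ between parallel $(n-1)$-arrows becomes, after whiskering one side to an iterated identity, the vanishing of a class in the \emph{lower} homotopy group $\pi_{n-1}$ — which is what lets surjectivity, resp. injectivity, of $f$ at one level control its behaviour at the adjacent level.
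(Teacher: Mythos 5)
Your proof is correct and follows essentially the same route as the paper's: the same whiskering maps (by $\Glk{n-1}(u)\comp^n_0-$ to change the base arrow, by $\alpha\comp^n_{n-1}-$ to pass between $\pi_n(G,u)$ and $\pi_n(G,u,v)$), the same reduction of ``$f(u)\tildeh[n-1]f(v)$ implies $u\tildeh[n-1]v$'' to injectivity in $\pi_{n-1}$ via composition with the $\comp^{n-1}_{n-2}$-inverse of $u$, and the same use of surjectivity at level $n+1$ to get injectivity at level $n$. The only difference is organizational (you prove the cycle $1\Rightarrow2\Rightarrow3\Rightarrow4\Rightarrow1$ where the paper proves three biconditionals), which changes nothing of substance.
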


\begin{proof}
L'implication $2 \Rightarrow 1$ est évidente. Montrons la réciproque.
Le cas $n = 1$ est évident.
Soient
$n \ge 2$ et $u$ une $(n-1)$-flèche de $G$. 
Posons $x = \Gls[0]{n-1}(u)$. On définit un isomorphisme
\[ \pi_{n}(G,x) \to \pi_n(G, u) \]
en envoyant une $n$-flèche $u : \Glk[n-1]{0}(x) \to \Glk[n-1]{0}(x)$
sur la $n$-flèche
$ \Glk{n-1}(u) \comp^n_0 u  : u \to u$.
Il est immédiat que le morphisme $f$ commute à cet isomorphisme, c'est-à-dire que le
carré 
\[
\xymatrix{
\pi_n(G, x) \ar[d] \ar[r] & \pi_n(G, u) \ar[d] \\
\pi_n(H, f(x))  \ar[r] & \pi_n(H, f(u)) \\
}
\]
est commutatif. L'application $\pi_n(G, u) \to \pi_n(H, f(u))$ est donc une
bijection pour $n \ge 2$. 

L'implication $3 \Rightarrow 2$ est évidente. Montrons la réciproque.
Soient $n \ge 1$ et $u, v$ deux $(n-1)$-flèches parallèles de $G$. Supposons qu'il
existe une $n$-flèche $\alpha : u \to v$ dans $G$. On définit alors un isomorphisme
\[ \pi_n(G, u) \to \pi_n(G, u, v) \]
en envoyant une $n$-flèche $\beta : u \to u$ sur la $n$-flèche
$\alpha \comp^n_{n-1} \beta  : u \to v$.
Il est immédiat que le morphisme $f$ commute à cet isomorphisme, c'est-à-dire que le
carré 
\[
\xymatrix{
\pi_n(G, u) \ar[d] \ar[r] & \pi_n(G, u, v) \ar[d] \\
\pi_n(H, f(u)))  \ar[r] & \pi_n(H, f(u), f(v)) \\
}
\]
est commutatif.
Ainsi, pour conclure, il suffit de montrer que s'il existe
une $n$-flèche $f(u) \to f(v)$ dans $H$, alors il existe une 
$n$-flèche $u \to v$ dans $G$. C'est clair pour \hbox{$n = 1$} par injectivité
de $\pi_0(f)$. Soit donc $n \ge 2$ et $\beta : f(u) \to f(v)$ une $n$-flèche de
$H$. Posons $x = \Gls{n-1}(u)$. La flèche $\Glk{n-1}(\Glw{n-1}(f(u)))
\comp^n_{n-2} \beta$ est une $n$-flèche de $H$ de source
$\Glw{n-1}(f(u)) \comp^{n-1}_{n-2} f(u) : f(x) \to f(x)$ et de but
$\Glw{n-1}(f(u)) \comp^{n-1}_{n-2} f(v) : f(x) \to f(x)$.
Puisque l'application
\[ \pi_{n-1}(G, x) \to \pi_{n-1}(H, f(x)) \]
est injective, les deux $(n-1)$-flèches $\Glw{n-1}(u) \comp^{n-1}_{n-2} u$ et
$\Glw{n-1}(u) \comp^{n-1}_{n-2} v$ sont égales dans $\pi_{n-1}(G,x)$. Puisque
$\PI_{n-1}(G)$ est un groupoïde, cela entraîne que $u = v$ dans
$\pi_{n-1}(G,x,y)$ et donc qu'il existe une $n$-flèche de $G$ de $u$ vers $v$.

L'implication $3 \Rightarrow 4$ est évidente. Montrons la réciproque.
Soient $n \ge 1$, $u,v$ deux $(n-1)$-flèches parallèles de $G$ et $\alpha, \beta$
deux $n$-flèches de $u$ vers $v$. Supposons qu'on ait $f(\alpha) = f(\beta)$
dans $\pi_n(H, f(u), f(v))$. Il existe alors une $(n+1)$-flèche de $H$ de
$f(\alpha)$ vers $f(\beta)$. Par surjectivité de l'application
\[ \pi_{n+1}(G, \alpha, \beta) \to \pi_{n+1}(H, f(\alpha), f(\beta)), \]
il existe une $(n+1)$-flèche de $G$ de $\alpha$ vers $\beta$. D'où $\alpha =
\beta$ dans $\pi_n(G, u, v)$.
\end{proof}

\begin{rem}
La dernière condition se reformule en la conjonction des propriétés
suivantes :
\begin{itemize}
\item pour tout objet $y$ de $H$, il existe un objet $x$ de $G$ tel que
$f(x)$ et $y$ soient homotopes ;
\item pour tout entier $n \ge 0$, tout couple $(u, v)$ de $n$-flèches parallèles de $G$ et toute
$(n+1)$-flèche $\beta : f(u) \to f(v)$ de $H$, il existe une 
$(n+1)$-flèche $\alpha : u \to v$ de $G$ telle que $f(\alpha)$ soit homotope à
$\beta$.
\end{itemize}
C'est exactement la notion d'équivalence faible de \oo-catégories strictes
exposée dans \cite{LMW}, restreinte aux \oo-groupoïdes stricts.
\end{rem}

\section{La catégorie homotopique}

\begin{paragr}
Un \ndef{localisateur} est la donnée d'une catégorie~$\C$ et d'une classe de
flèches $\W$ de~$\C$. On notera $(\C, \W)$ un tel localisateur et on appellera
$\W$ la classe des \ndef{équivalences faibles}.

La donnée d'un localisateur est suffisante pour définir les notions importantes
de la théorie de l'homotopie (comme par exemple, les notions de catégorie
homotopique, de limites homotopiques et de foncteurs dérivés).

Si $(\C, \W)$ est un localisateur, la \ndef{catégorie homotopique} de $(\C,
\W)$ est la localisation Gabriel-Zisman de $\C$ par $\W$, c'est-à-dire la
catégorie obtenue à partir de $\C$ en inversant formellement les morphismes de
$\W$. On notera cette catégorie $\Ho^{}_\W(\C)$ ou $\Ho(\C)$ si le contexte
rend claire la classe $\W$.

\forlang{A priori}, l'existence de la catégorie $\Ho(\C)$ nécessite un
changement d'univers (au sens de Grothendieck). Cependant, dans tous les
exemples que nous considèreront, la catégorie $\Ho(\C)$ existera sans
changement d'univers.
\end{paragr}

\begin{exs}
\
\let\sitem\item
\begin{myenumerate}
\item 
Soient $\C = \Top$ la catégorie des espaces topologiques et $\W_\infty$ la classe des
équivalences faibles topologiques, c'est-à-dire des applications continues $f :
X \to Y$ vérifiant les propriétés suivantes :
\begin{itemize}
\sitem l'application $\pi_0(f) : \pi_0(X) \to \pi_0(Y)$ est une bijection ;
\sitem pour tout $x$ dans $X$ et tout $i \ge 1$, le morphisme $\pi_i(f, x) : \pi_i(X, x)
\to \pi_i(Y, f(x))$ est un isomorphisme.
\end{itemize}

La \ndef{catégorie homotopique} $\Hot$ est la catégorie
$\Ho_{\W_\infty}(\Top)$. Un \ndef{type d'homotopie} est un objet de $\Hot$ à
isomorphisme canonique près.

\item
Fixons un entier $n \ge 0$. Soient $\C = \Top$ et $\W_n$ la classe des
$n$-équivalences faibles topologiques, c'est-à-dire des applications
continues $f : X \to Y$ vérifiant les propriétés suivantes :
\begin{itemize}
\sitem l'application $\pi_0(f) : \pi_0(X) \to \pi_0(Y)$ est une bijection ;
\sitem pour tout $x$ dans $X$ et tout $i$ tel que $1 \le i \le n$, le morphisme
de groupes $\pi_i(f, x) : \pi_i(X, x) \to \pi_i(Y, f(x))$ est un isomorphisme.
\end{itemize}

La \ndef{catégorie des $n$-types d'homotopie} $\Hot_n$ est la catégorie
$\Ho_{\W_n}(\Top)$. Un \ndef{$n$-type d'homotopie} est un objet de $\Hot$ à
isomorphisme canonique près.

\item Soit $n \ge 0$. Notons $\Top_{\le n}$ la sous-catégorie pleine de $\Top$
constituée des espaces topologiques possédant la propriété suivante : pour
tout $i > n$ et tout $x$ dans $X$, le groupe $\pi_i(X, x)$ est trivial. Cette
catégorie sera munie dans la suite des équivalences faibles topologiques
(qui coïncide évidemment avec les $n$-équivalences faibles)

\item 
La catégorie $\CW$ des CW-complexes sera dans la suite munie des équivalences
d'homotopie entre CW-complexes (qui sont également les équivalences faibles
topologiques entre CW-complexes par le théorème de Whitehead).

\item 
La catégorie $\pref{\Delta}$ des ensembles simpliciaux sera dans la suite munie
des équivalences faibles d'ensembles simpliciaux, c'est-à-dire des morphismes
d'ensembles simpliciaux qui s'envoient \forlang{via} le foncteur réalisation
géométrique $\vert\cdot\vert$ sur une équivalence d'homotopie entre
CW-complexes.

\item 
La catégorie $\Cat$ des petites catégories sera dans la suite munie des
équivalences faibles de catégories, c'est-à-dire des foncteurs s'envoyant
\forlang{via} le foncteur nerf $N$ sur une équivalence faible d'ensembles
simpliciaux.

\item 
La catégorie $\wgpd$ des \oo-groupoïdes stricts sera dans la suite munie des
équivalences faibles de \oo-groupoïdes stricts (qui ont été définies dans la
section précédente).

\item 
Soient $\A$ une catégorie abélienne et $k$ dans $\Z \cup \{-\infty\}$. On
notera $C_{\ge k}(\A)$ la catégorie des complexes homologiques d'objets de $\A$
en degré supérieur ou égal à $k$. Cette catégorie sera munie dans la suite
des quasi-isomorphismes, c'est-à-dire des morphismes de complexes induisant des
isomorphismes en homologie.

La catégorie $\Ho(C_{\ge k}(\A))$ n'est rien d'autre que la catégorie dérivée
$D_{\ge k}(\A)$ de $\A$.
\end{myenumerate}
\end{exs}

\begin{thm}
La chaîne de foncteurs
\[ \Cat \xrightarrow{N} \pref{\Delta} \xrightarrow{\vert\cdot\vert} \CW \to
\Top \]
induit une chaîne d'équivalences de catégories
\[ \Ho(\Cat) \xrightarrow{\overline{N}} \Ho(\pref{\Delta})
\xrightarrow{\overline{\vert\cdot\vert}} \Ho(\CW) \to \Ho(\Top) = \Hot. \]
\end{thm}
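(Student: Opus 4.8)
The plan is to treat the three arrows of the chain separately, exhibiting for each a quasi-inverse built from the singular complex functor $\mathrm{Sing}\colon\Top\to\pref{\Delta}$ and, on the $\Cat$ side, from barycentric subdivision. The preliminary observation, which makes the displayed diagram of homotopy categories exist at all, is that each of the four functors sends weak equivalences to weak equivalences: for $N$ this is the very definition of a weak equivalence of categories; for $\vert\cdot\vert\colon\pref{\Delta}\to\CW$ it is the definition of a weak equivalence of simplicial sets (it is sent to a homotopy equivalence of CW\nobreakdash-complexes, which is the notion of weak equivalence on $\CW$); and for $\CW\to\Top$ it holds because a homotopy equivalence is in particular a weak homotopy equivalence. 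The last identification $\Ho(\Top)=\Hot$ is definitional.

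For the arrow $\Ho(\CW)\to\Ho(\Top)$ I would use CW approximation: the counit $\varepsilon_X\colon\vert\mathrm{Sing}(X)\vert\to X$ is a natural weak homotopy equivalence, and since $\vert\mathrm{Sing}(X)\vert$ is a CW\nobreakdash-complex, Whitehead's theorem shows $\varepsilon_X$ is also invertible in $\Ho(\CW)$. Hence $X\mapsto\vert\mathrm{Sing}(X)\vert$ descends to a functor $\Ho(\Top)\to\Ho(\CW)$ which, via $\varepsilon$ on both sides, is quasi-inverse to the functor induced by the inclusion. For the arrow $\overline{\vert\cdot\vert}\colon\Ho(\pref{\Delta})\to\Ho(\CW)$ I would use the adjunction $\vert\cdot\vert\dashv\mathrm{Sing}$: its counit is a weak equivalence by the previous point (and $\mathrm{Sing}$ of a CW\nobreakdash-complex is a Kan complex), while its unit $\eta_K\colon K\to\mathrm{Sing}\vert K\vert$ is a weak equivalence of simplicial sets for every $K$ --- the classical comparison theorem identifying the combinatorial homotopy groups of $K$ with those of $\vert K\vert$. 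The functor induced by $\mathrm{Sing}$ is then a quasi-inverse.

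For the last arrow $\overline{N}\colon\Ho(\Cat)\to\Ho(\pref{\Delta})$, essential surjectivity is the easy half: for any simplicial set $K$ the barycentric subdivision $\mathrm{Sd}\,K$ is the nerve of a small category (for instance the poset of non-degenerate simplices of $K$ ordered by the face relation), and the last-vertex map $\mathrm{Sd}\,K\to K$ is a weak equivalence (indeed $\vert\mathrm{Sd}\,K\vert\cong\vert K\vert$), so every object of $\Ho(\pref{\Delta})$ lies in the essential image of $\overline{N}$. The genuinely non-formal input, and the main obstacle, is full faithfulness of $\overline{N}$: a plain adjunction argument does not suffice here, because the left adjoint $c\colon\pref{\Delta}\to\Cat$ of the nerve does not have weakly invertible unit and counit (the nerve is not left-inverse to $c$ up to homotopy on all of $\pref{\Delta}$). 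I would instead invoke the deeper comparison of $\Cat$ with $\pref{\Delta}$: either Thomason's model structure on $\Cat$, for which a suitable iterate $\mathrm{Ex}^2N$ of the nerve is the right adjoint of a Quillen equivalence with $\pref{\Delta}$ --- so that its derived functor, naturally isomorphic to $\overline{N}$ since the $\mathrm{Ex}$-units are weak equivalences, is an equivalence of homotopy categories --- or the explicit homotopy inverse to the nerve constructed by Fritsch and Latch. Once $\overline{N}$ is known to be fully faithful and essentially surjective the theorem follows, the remaining content being the bookkeeping of units and counits through Whitehead's theorem sketched above.
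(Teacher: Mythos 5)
Your proposal is correct in substance, and it operates at essentially the same level as the paper, which disposes of the theorem by pure citation: Illusie (Cor.~3.3.1) for $\overline{N}$, Milnor or Gabriel--Zisman (ch.~VII) for the two realization functors. What you do differently is (i) reconstruct explicitly the content of the Milnor/GZ references --- the counit $\vert\mathrm{Sing}(X)\vert\to X$ plus Whitehead for $\Ho(\CW)\to\Ho(\Top)$, and the unit/counit of $\vert\cdot\vert\dashv\mathrm{Sing}$ for $\overline{\vert\cdot\vert}$ --- and (ii) replace Illusie's argument for the nerve by Thomason's Quillen equivalence $c\,\mathrm{Sd}^2\dashv\mathrm{Ex}^2 N$ or by Fritsch--Latch; both are legitimate, and you correctly identify that the only non-formal input is the full faithfulness of $\overline{N}$, which no naive adjunction with the left adjoint $c$ provides. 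One small simplification you missed: with the paper's definition of weak equivalence in $\pref{\Delta}$ (a map whose realization is a homotopy equivalence of CW-complexes), the unit $\eta_K\colon K\to\mathrm{Sing}\vert K\vert$ is a weak equivalence \emph{formally}, by the triangle identity $\ceps{\vert K\vert}\circ\vert\eta_K\vert=\mathrm{id}$ together with Whitehead applied to $\ceps{\vert K\vert}$; the comparison of combinatorial homotopy groups you invoke is not needed (and, as stated, would anyway require $K$ Kan or a fibrant replacement).

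One statement in your essential-surjectivity argument is wrong as written: for a general simplicial set $K$, the subdivision $\mathrm{Sd}\,K$ is \emph{not} the nerve of the poset of non-degenerate simplices of $K$, nor the nerve of any category. Already for $K=\Delta^1/\partial\Delta^1$ that poset has two elements, so its nerve is $\Delta^1$, whereas $\mathrm{Sd}\,K$ has two distinct non-degenerate edges; and for $K=\Delta^2/\partial\Delta^2$ the quotient $\mathrm{Sd}\,K=\mathrm{Sd}(\Delta^2)/\mathrm{Sd}(\partial\Delta^2)$ contains a $2$-simplex whose face $d_2$ is degenerate while $d_0\neq d_1$, which is impossible in the nerve of a category (there $d_2$ an identity forces $d_1=d_0$). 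The identification of $\mathrm{Sd}$ with a nerve of a poset needs hypotheses on $K$, which is precisely why Thomason subdivides twice. The gap is harmless for your proof, since essential surjectivity is already contained in the Thomason or Fritsch--Latch equivalences you invoke for full faithfulness; alternatively --- and closer to the reference the paper actually uses --- one can take the category $\Delta/K$ of simplices of $K$ and the natural last-vertex map $N(\Delta/K)\to K$, which is a weak equivalence for every $K$.
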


\begin{proof}
Voir le corollaire 3.3.1 de \cite{Illusie} pour $\overline{N}$. Pour les deux
autres foncteurs, voir \cite{Milnor} ou le chapitre VII de \cite{GZ}.
\end{proof}

\begin{rem}
Par le théorème précédent, on dispose donc de quatre définitions
équivalentes de $\Hot$. De même, un théorème similaire donne quatre définitions
équivalentes de $\Hot_n$. Le choix des définitions utilisant $\Cat$ est à la
base de la théorie des catégories test de Grothendieck
exposée dans \cite{GrothPS} et \cite{Maltsi}.
\end{rem}

\begin{paragr}
Les propriétés universelles de $\Hot$ et $\Hot_n$ entraînent l'existence
d'un triangle commutatif
\[
\xymatrix{
\Ho(\Top_{\le n}) \ar[rr]^-i \ar[dr]_j & & \Hot \ar[ld]^k \\
& \Hot_n & & .
}
\]
Des résultats classiques de la théorie de l'homotopie montrent que $i$ est
pleinement fidèle et que $j$ est une équivalence de catégories.
Ainsi, on peut identifier la catégorie $\Hot_n$ à la sous-catégorie pleine de
$\Hot$ dont les objets proviennent de $\Top_n$.

Si $X$ est un type d'homotopie, on appellera $k(X)$ le \ndef{$n$-type
d'homotopie associé à $X$}. On pourra le voir comme un objet de $\Hot$
\forlang{via} l'identification que l'on vient de décrire.
\end{paragr}

\section{La catégorie homotopique des $\infty$-groupoïdes stricts simplement connexes}

\begin{paragr}
On dira qu'un \oo-groupoïde strict $G$ est connexe s'il a au plus une
composante connexe. On dira que $G$ est simplement connexe s'il est connexe et
si pour tout objet $x$ de $G$, le groupe $\pi_1(G, x)$ est trivial. On notera
$\wgpdsc$ la sous-catégorie pleine de $\wgpd$ formée des \oo-groupoïdes
simplement connexes.

Soit $k$ un entier positif. On dira qu'un \oo-groupoïde strict $G$ est
\ndef{$k$-réduit} si pour tout $l$ entre $0$ et $k$, $G$ possède une unique
$l$-flèche. On notera $\wgpd_{\ge k+1}$ la sous-catégorie pleine de $\wgpd$
dont les objets sont les \oo-groupoïdes $k$-réduits.
\end{paragr}

\begin{prop}\label{prop:equiv_wgpdr_wgpd2}
Pour tout \oo-groupoïde strict simplement connexe $G$, il existe un
\oo-groupoïde strict $1$-réduit $G'$ et une équivalence faible de $G'$ vers
$G$.  En particulier, le foncteur $\Ho(\wgpdr) \to \Ho(\wgpdsc)$, induit par le
foncteur d'inclusion de $\wgpdr$ dans $\wgpdsc$, est essentiellement surjectif.
\end{prop}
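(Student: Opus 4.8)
The goal is, given a simply connected strict $\infty$-groupoid $G$, to replace it by a weakly equivalent $1$-reduced one. The natural strategy is to perform two successive "reductions" --- first killing $\pi_0$, then killing the $1$-cells up to homotopy --- each time by a functorial-looking explicit construction together with a weak equivalence. The key point is that these constructions only change $G$ in low degrees and do not disturb the homotopy groups, so the weak equivalence condition from \ref{prop:eqf_defs} can be checked directly.

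\emph{Step 1: reduction to a $0$-reduced $\infty$-groupoid.} Since $G$ is connected, $\pi_0(G)$ is a singleton, so all objects of $G$ are homotopic. Fix an object $x_0$ of $G$ and let $G'$ be the sub-$\infty$-groupoid whose only object is $x_0$, with $n$-cells ($n\ge 1$) those cells $u$ of $G$ all of whose iterated sources and targets $\Gls[0]{n}(u),\Glt[0]{n}(u)$, etc., eventually land on $x_0$ --- equivalently the cells lying "over" $x_0$. One checks this is closed under the operations $\comp^i_j$ and $\Glk{i}$, hence a strict $\infty$-groupoid, and the inclusion $G'\hookrightarrow G$ is a weak equivalence: it is bijective on $\pi_0$ (both singletons), and using that $\PI_1(G)$ is a connected groupoid, the higher homotopy groups $\pi_n(G,u,v)$ for parallel cells above $x_0$ are computed inside $G'$ exactly as in $G$. (Concretely: conjugation by a chosen $1$-cell $x_0\to x$ transports $\pi_n(G,y)$ isomorphically for any $y$, which is the mechanism already used in the proof of \ref{prop:eqf_defs}; so restricting to the fibre over $x_0$ loses nothing.)

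\emph{Step 2: reduction to a $1$-reduced $\infty$-groupoid.} Now $G'$ has one object $x_0$; since $G$ (hence $G'$) is simply connected, $\pi_1(G',x_0)$ is trivial, i.e.\ every $1$-cell of $G'$ is homotopic to the identity $\Glk{0}(x_0)$. Let $G''$ be obtained by keeping the unique object and the unique $1$-cell $\id=\Glk{0}(x_0)$, and taking as $n$-cells for $n\ge 2$ those cells of $G'$ whose boundary $1$-cells are $\id$ (i.e.\ $\Gls[1]{n}(u)=\Glt[1]{n}(u)=\id$). Again this is stable under the $\infty$-category operations; the only point needing care is that compositions $\comp^n_0$ of such cells, whose source/target along $X_0$ agree with $\id\comp^1_0\id=\id$, again have boundary $\id$ by the unit axiom (C3) --- so $G''$ is a $1$-reduced strict $\infty$-groupoid. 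The inclusion $G''\hookrightarrow G'$ is a weak equivalence: $\pi_0$ and $\pi_1$ are trivial on both sides, and for $n\ge 2$ every cell of $G'$ is, via composition with homotopies witnessing $w\sim\id$ for its boundary $1$-cells, homotopic to one with boundary $\id$; this gives surjectivity of $\pi_n(G'',\cdot)\to\pi_n(G',\cdot)$ on objects and on cells in the sense of the last item of \ref{prop:eqf_defs}, and the reformulation in the Remark following \ref{prop:eqf_defs} then closes the argument. Composing, $G'':=G'\hookrightarrow G$ yields the desired $1$-reduced $G''$ with a weak equivalence to $G$. Finally, since a weak equivalence $G''\to G$ in $\wgpdsc$ becomes an isomorphism in $\Ho(\wgpdsc)$, and $G''$ comes from $\wgpdr$, the induced functor $\Ho(\wgpdr)\to\Ho(\wgpdsc)$ is essentially surjective.

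\emph{Main obstacle.} The delicate part is Step 2: one must show that the "fibre over the identity $1$-cell" really is stable under \emph{all} the compositions $\comp^n_j$ with $0\le j<n$ (not just $j\ge 1$, where it is obvious), and that restricting to it does not change the homotopy groups. The first is handled by the unit axioms, which force the relevant iterated sources/targets to be iterated identities on $\id$; the second requires the explicit "conjugation/translation by a homotopy" trick --- the same device used repeatedly in the proof of \ref{prop:eqf_defs} --- to move an arbitrary $n$-cell of $G'$ to one whose $1$-dimensional boundary is $\id$, while tracking that this move is compatible with the groupoid structures $\PI_n$. Everything else is a routine verification that the subsets chosen are sub-$\infty$-groupoids.
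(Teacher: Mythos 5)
Your construction is correct and is essentially the paper's own proof: composing your two steps yields exactly the sub-$\infty$-groupoid the paper takes in one stroke (unique object $x$, unique $1$-cell $\Glk{0}(x)$, and the $n$-cells with $\Gls[1]{n}(f)=\Glk[1]{0}(x)=\Glt[1]{n}(f)$), whose inclusion is a weak equivalence. Your extra machinery in Step 2 (criterion (4) of la proposition \ref{prop:eqf_defs} and the conjugation trick) is unnecessary, since any cell or homotopy relevant to $\pi_n(\cdot,x)$ at the unique object automatically has $1$-boundary the identity, but it does no harm.
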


\begin{proof}
Soit $G$ un \oo-groupoïde strict simplement connexe. Le choix d'un objet de $x$
de $G$ détermine un sous-\oo-groupoïde $G'$ de $G$ défini de la manière
suivante : $G'_0 = \{x\}$, $G'_1 = \{\Thk[1]{0}(x)\}$ et pour $n \ge 2$,
$G'_n$ est l'ensemble des $n$-flèches $f$ de $G$ telles que $\Gls[1]{n}(f) =
\Glk[1]{0}(x) = \Glt[1]{n}(f)$. L'inclusion $G' \to G$ est clairement une
équivalence faible.
\end{proof}

\begin{rem}\label{rem:equiv_sc_red}
Le foncteur $\Ho(\wgpdr) \to \Ho(\wgpdsc)$ est en fait une équivalence de
catégories. Par la proposition précédente, il suffit de montrer que ce foncteur
est pleinement fidèle. Cela résulte de l'existence de la structure de catégorie
de modèles de Brown-Golasi\'nski sur $\wgpd$ (voir \cite{BrownGolas} et
\cite{AraMetWGpd}) et du fait que les sous-catégories $\wgpdr$ et $\wgpdsc$
admettent des remplacements cofibrants et fibrants pour cette structure.
\end{rem}

\begin{paragr}
On notera $\Ab$ la catégorie des groupes abéliens et $\wgpd(\Ab)$ la catégorie
des \oo-groupoïdes stricts en groupes abéliens, c'est-à-dire des objets
\oo-groupoïdes stricts dans la catégorie des groupes abéliens. Par définition,
un objet $G$ de $\wgpd(\Ab)$ est un \oo-groupoïde strict dont, pour tout $n
\ge 0$, l'ensemble des flèches $G_n$ est muni d'une structure de groupes
abéliens, et dont tous les morphismes structuraux sont des morphismes de
groupes abéliens. On notera $\wgpdr(\Ab)$ la catégorie des \oo-groupoïdes
stricts en groupes abéliens $1$-réduits. Le foncteur d'oubli de la catégorie
des groupes abéliens vers la catégorie des ensembles induit un foncteur
$\wgpd(\Ab) \to \wgpd$ qui envoie un \oo-groupoïde strict en groupes abéliens
sur son \oo-groupoïde strict sous-jacent. Ce foncteur se restreint en un
foncteur $\wgpdr(\Ab) \to \wgpdr$. 
\end{paragr}

\begin{prop}\label{prop:wgpdr_hi}
Le foncteur $\wgpdr(\Ab) \to \wgpdr$ est un isomorphisme de catégories.
\end{prop}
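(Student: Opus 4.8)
The claim is that the forgetful functor $\wgpdr(\Ab) \to \wgpdr$ is an isomorphism of categories. I need to produce an inverse functor, i.e., show that every $1$-reduced strict $\infty$-groupoid carries a *unique* structure of strict $\infty$-groupoid in abelian groups compatible with the underlying structure, and that every morphism of $1$-reduced strict $\infty$-groupoids is automatically a morphism of the abelian-group-object structures. The underlying mechanism is the Eckmann–Hilton argument, already invoked in the excerpt: because $G$ is $1$-reduced, for $n \ge 2$ every $n$-arrow $x$ is a loop $\Glk[n-1]{0}(\ast) \to \Glk[n-1]{0}(\ast)$ on the unique $(n-1)$-arrow (itself the iterated identity on the unique object), so it admits all the compositions $\comp^n_j$ for $0 \le j < n$, and all of these land in $G_n$ (there are no boundary conditions to satisfy).

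**Key steps.** First I would observe that for a $1$-reduced $G$ and $n \ge 2$, each set $G_n$ carries, for each $j$ with $0 \le j \le n-1$, a monoid structure with multiplication $\comp^n_j$ and unit $\Glk[n]{0}(\ast) = \Glk[n]{j}\Glt[j]{n}(x)$ (all these units coincide by $1$-reducedness); the exchange law (C2) between $\comp^n_j$ and $\comp^n_k$ for $j \ne k$, both $\le n-1$, is exactly the hypothesis of the Eckmann–Hilton lemma applied to any two of these, so all the $\comp^n_j$ on $G_n$ agree and define a single commutative monoid structure. Since $G$ is a groupoid, every element has a $\comp^n_j$-inverse, so this commutative monoid is an abelian group; write $+$ for the operation and $-$ for inversion. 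For $n = 0, 1$ the sets $G_0, G_1$ are singletons and carry the trivial group structure. Next I would check that the structural maps $\Gls{n}, \Glt{n}, \Glk{n}$ are group homomorphisms: $\Glk{n}$ is a homomorphism by functoriality of units (C4), i.e. $\Glk{n}(v \comp^n_{n-1} u) = \Glk{n}(v) \comp^{n+1}_n \Glk{n}(u)$, which after the identifications reads $\Glk{n}(x+y) = \Glk{n}(x) + \Glk{n}(y)$; the source and target maps $G_n \to G_{n-1}$ for $n \ge 2$ land in a singleton (when $n-1 \ge 1$) or are forced by axioms (A1), (A2) to respect $\comp^n_{n-1}$, hence $+$. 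Finally, the various compositions $\comp^n_j : (X_i, \dots) \times_{X_j} (\dots) \to X_i$ for the *underlying* $\infty$-category — in the $1$-reduced case with $j \ge 1$ these are defined on all of $G_n \times G_n$ and coincide with $+$ as shown; the case $j = 0$ likewise. So every structural operation of $G$ is a morphism of abelian groups, proving $G$ lifts (uniquely, since $+$ was *forced* to be $\comp^n_{n-1}$) to an object of $\wgpdr(\Ab)$. For morphisms: any $f : G \to H$ in $\wgpdr$ commutes with $\comp^n_{n-1}$, hence with $+$, so $f$ is automatically a morphism in $\wgpdr(\Ab)$. Thus the forgetful functor is bijective on objects and fully faithful, i.e. an isomorphism of categories.

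**Main obstacle.** The only genuinely delicate point is bookkeeping: making the Eckmann–Hilton argument apply cleanly requires that the two operations share the *same* unit, and one must verify that the various units $\Glk[n]{j}\Glt[j]{n}(x)$ and $\Glk[n]{j}\Gls[j]{n}(x)$ for differing $j$ all reduce to the single element $\Glk[n]{0}(\ast)$ — this is where $1$-reducedness (and not merely $0$-reducedness) is essential, since we need the unique $1$-arrow to be $\Glk[1]{0}(\ast)$ so that all lower boundaries collapse. I would phrase this as: for $n \ge 2$, the globular source/target conditions make every pair of composable $n$-arrows actually an arbitrary pair, and make every unit the iterated identity of the unique object. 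Once that is pinned down, the Eckmann–Hilton lemma (in the form: a set with two unital associative operations sharing a unit and satisfying the interchange law has both operations equal, associative, and commutative) does all the work, and the rest is the routine verification that the structural maps respect the resulting group law — which I would state but not belabor.
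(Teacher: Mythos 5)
Your overall strategy is the paper's (Eckmann--Hilton to produce the abelian group law, then checking that the structural maps are additive, then uniqueness and automatic additivity of morphisms), but the pivotal claim as you state it is false. For a $1$-reduced $G$ and $n \ge 2$, it is not true that every $\comp^n_j$ with $0 \le j \le n-1$ is defined on all of $G_n \times G_n$ with common unit $\Glk[n]{0}(\ast)$: $1$-reducedness only makes $G_0$ and $G_1$ singletons, so this holds for $j = 0, 1$ only. For $2 \le j \le n-1$ the set $G_j$ is in general a nontrivial group, $\comp^n_j$ is defined only on the pullback $G_n \times_{G_j} G_n$, its local units $\Glk[n]{j}\Glt[j]{n}(x)$ genuinely depend on $x$, and Eckmann--Hilton does not apply to it; moreover $\comp^n_j$ does not coincide with $+$ on composable pairs (in Bourn's description, cf.\ \ref{paragr:bourn}, one has $(x_n, \dots, x_0) \comp^n_j (y_n, \dots, y_0) = (x_n + y_n, \dots, x_{j+1} + y_{j+1}, y_j, \dots, y_0)$, which differs from coordinatewise addition as soon as a coordinate in degree $\le j$ is nonzero; take for instance a complex concentrated in degrees $2$ and $3$, $n = 3$, $j = 2$). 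Because of this false identification, your argument never actually establishes the point the statement needs, namely that the partially defined operations $\comp^n_j : G_n \times_{G_j} G_n \to G_n$ for $2 \le j \le n-1$ are group homomorphisms with respect to $+$; the paper proves exactly this by the interchange law with $\comp^n_0$: $(u + u') \comp^n_j (v + v') = (u \comp^n_0 u') \comp^n_j (v \comp^n_0 v') = (u \comp^n_j v) \comp^n_0 (u' \comp^n_j v') = (u \comp^n_j v) + (u' \comp^n_j v')$. It is a one-line computation, but it cannot be dismissed as a tautology.

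Two smaller slips in the same vein. Your treatment of $\Gls{n}, \Glt{n}$ asserts that $G_{n-1}$ is a singleton when $n-1 \ge 1$ (false for $n \ge 3$) and appeals to compatibility with $\comp^n_{n-1}$, which the precategory axioms do not give in homomorphism form (for $j = n-1$ they give $\Gls{n}(v \comp^n_{n-1} u) = \Gls{n}(u)$); the correct argument uses $+ = \comp^n_0$ and the case $j < n-1$ of those axioms, $\Gls{n}(v \comp^n_0 u) = \Gls{n}(v) \comp^{n-1}_0 \Gls{n}(u)$, as in the paper. Your uniqueness claim is right in substance but too compressed: one must argue that for an arbitrary abelian group object structure $+'$ on $G$, the map $\Glk[n]{0}$ being additive forces the unit of $+'$ to be $\Glk[n]{0}(\ast)$, and $\comp^n_0$ being additive for $+'$ gives the interchange hypothesis, so Eckmann--Hilton yields $+' = \comp^n_0$; the paper spells this out, and the automatic additivity of morphisms then follows as you say.
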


\begin{proof}
Pour démontrer la proposition, il suffit de prouver que tout
\oo-grou\-poï\-de strict $1$-réduit est canoniquement un \oo-groupoïde
en groupes abéliens et que les morphismes de \oo-groupoïdes stricts
préservent cette structure abélienne.

Soit donc $G$ un tel \oo-groupoïde. Notons $\ast_i$ l'unique $e$-flèche de $G$
pour $e = 0, 1$. Soit $i \ge 2$. Si $u$ et $v$ sont deux $i$-flèches
de $G$, alors
\[ \Gls[e]{i}(u) = \Glt[e]{i}(u) = \Gls[e]{i}(v) = \Glt[e]{i}(v) = *_e \]
pour $e = 0, 1$. L'ensemble des $i$-flèches est donc muni de deux structures de
groupes, induites par les compositions $\comp^i_0$ et $\comp^i_1$. Par la loi
de l'échange, ces deux opérations sont compatibles. De plus, elles ont la même
unité puisque $\Glk[i]{0}(*_0) = \Glk[i]{1}(*_1)$. Par l'argument
d'Eckmann-Hilton, ces deux lois sont égales et commutatives. On notera $+$
cette opération. Le groupe $(G_i,+)$ est donc abélien.

Vérifions que les données de $G$ sont compatibles à cette structure
abélienne. Supposons toujours $i \ge 2$ et soient $u$, $v$
deux $i$-flèches. On a
\[
\Gls{i}(u + v) = \Gls{i}(u \comp_0^i v) = \Gls{i}(u) \comp_0^{i-1}
\Gls{i}(v) = \Gls{i}(u) + \Gls{i}(v).
\]
L'application $\Gls{i} : G_i \to G_{i-1}$ est donc un morphisme de groupes.
Il en est de même de l'application $\Glt{i}$. Soient $j$ tel que $0 \le j < i$ et $u',
v'$ deux autres $i$\nobreakdash-flèches vérifiant $\Gls[j]{i}(u) = \Glt[j]{i}(v)$ et
$\Gls[j]{i}(u') = \Glt[j]{i}(v')$.
On a
\[(u + u') \comp_j^i (v + v') = (u \comp_0^i u')
\comp_j^i (v \comp_0^i v') = (u \comp_j^i v) \comp_0^i (u' \comp_j^i v') = 
(u \comp_j^i v) + (u' \comp_j^i v').
\]
Ainsi $\comp_j^i$ est un morphisme de groupes. Supposons maintenant $i \ge 0$
et soient $u, v$ deux $i$-flèches. On a
\[
\Glk{i}(u + v) =
\Glk{i}(u \comp_0^i v) = \Glk{i}(u) \comp_0^{i+1} \Glk{i}(v) = \Glk{i}(u) +
\Glk{i}(v)
\]
et $\Glk{i}$ est un morphisme de groupes.

Montrons maintenant que cette structure abélienne est unique. Donnons-nous donc
une structure de \oo-groupoïde strict en groupes abéliens sur $G$. Fixons $i
\ge 2$ et notons $+'$ la loi de groupe sur $G_i$ de cette structure.  Puisque
l'application $\Glk[i]{0}$ respecte la loi $+'$, l'unité de~$+'$ est
$\Glk[i]{0}(\ast_0)$. En particulier, $+'$ et $\comp^i_0$ ont même unité. Par
ailleurs, puisque l'application $\comp^i_0 : G_i \times G_i \to G_i$ respecte
la loi $+'$, les lois $+'$ et $\comp^i_0$ sont compatibles.  Par l'argument
d'Eckmann-Hilton, elles sont donc égales. Ainsi, la structure abélienne est
uniquement déterminée par les compositions $\comp^i_0$.

De plus, puisque $+ = \comp^i_0$, un morphisme de \oo-groupoïdes stricts
$1$-réduits est automatiquement un morphisme de \oo-groupoïdes stricts en
groupes abéliens.
\end{proof}

\begin{paragr}\label{paragr:bourn}
Nous allons maintenant montrer que la catégorie des \oo-groupoïdes
stricts en groupes abéliens est canoniquement équivalente à la catégorie
$C_{\ge0}(\Ab)$ des complexes homologiques de groupes abéliens en degré
positif (ce résultat a été établi par Bourn dans \cite{Bourn}).

Soit $C$ un tel complexe. Notons $d_i : C_i \to C_{i-1}$ sa différentielle.
On associe à $C$ un \oo-groupoïde strict en groupes abéliens $G$ défini de
la manière suivante :
\begin{itemize}
\item le groupe $G_i$ des $i$-flèches est $C_i \oplus C_{i-1} \oplus \dots \oplus C_0$ ;
\item pour $i \ge 1$, le morphisme $\Gls{i} : G_i = C_i \oplus G_{i-1} \to G_{i - 1}$ est la
projection canonique ;
\item pour $i \ge 1$, le morphisme $\Glt{i} : G_i = C_i \oplus G_{i-1} \to G_{i - 1}$ est la
somme de $d_i$ et de la projection canonique ;
\item pour $i \ge 0$, le morphisme $\Glk{i} : G_i \to G_{i+1} = C_{i+1} \oplus G_i$ est l'injection
canonique ;
\item pour $i > j \ge 0$, le morphisme $\comp^i_j : (G_i, \Gls[j]{i})
\times_{G_j} (\Glt[j]{i}, G_i) \to
G_i$ est défini par
\[
(x_i, \dots, x_0) \comp^i_j (y_i, \dots, y_0) =
(x_i + y_i, \dots, x_{j+1} + y_{j+1}, y_j, \dots, y_0) \]
(notons que $(x_i, \dots, x_0, y_i, \dots, y_0)$ appartient à
$(G_i, \Gls[j]{i})
\times_{G_j} (\Glt[j]{i}, G_i)$
 si et seulement si on a
$(x_j, \dots, x_0) = (d_{j+1}(y_{j+1}) +
y_j, y_{j-1}, \dots, y_0)$).
\end{itemize}
On vérifie facilement que $G$ est bien un \oo-groupoïde strict en groupes
abéliens. Pour $i > j \ge 0$, le $\comp^i_j$-inverse d'un élément $(x_i,
\dots, x_0)$ de $G_i$ est donné par
\[ (-x_i, \dots, -x_{j+1}, d_{j+1}(x_{j+1}) + x_j, x_{j-1}, \dots, x_0). \]
De plus, si $f : C \to C'$ est un morphisme de complexes,
les composantes $f_k : C_k \to C'_k$ induisent pour tout $i$ positif, un
morphisme $G_i = C_i \oplus \dots \oplus C_0 \to G'_i = C'_i \oplus \dots
\oplus C'_0$. On vérifie facilement que ces morphismes définissent un
morphisme de \oo-groupoïdes stricts en groupes abéliens. On vient ainsi de
définir un foncteur $C_{\ge 0}(\Ab) \to \wgpd(\Ab)$.
\end{paragr}

\begin{prop}
Le foncteur $C_{\ge 0}(\Ab) \to \wgpd(\Ab)$ défini dans le paragraphe
précédent est une équivalence de catégories.
\end{prop}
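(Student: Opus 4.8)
The plan is to exhibit an explicit quasi-inverse functor $\wgpd(\Ab) \to C_{\ge 0}(\Ab)$ and to check that the two composites are naturally isomorphic to the identities. Given a strict $\infty$-groupoid $G$ in abelian groups, one defines a chain complex $\glcomp{G}$ as follows. For $i \ge 0$, set $\glcomp{G}_i = \Ker(\Gls{i} : G_i \to G_{i-1})$ (with the convention $\glcomp{G}_0 = G_0$), the group of $i$-arrows whose source is the zero $(i-1)$-arrow; this is a subgroup since $\Gls{i}$ is a group homomorphism. The differential $d_i : \glcomp{G}_i \to \glcomp{G}_{i-1}$ is the restriction of $\Glt{i}$. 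One must check that $d_i$ lands in $\glcomp{G}_{i-1}$, i.e. that $\Gls{i-1}\Glt{i}(x) = 0$ for $x \in \glcomp{G}_i$: this follows from the globular identity $\Gls{i-1}\Glt{i} = \Gls{i-1}\Gls{i}$ together with $\Gls{i}(x)=0$. That $d_{i-1}d_i = 0$ likewise follows from $\Glt{i-1}\Glt{i} = \Glt{i-1}\Gls{i}$ on $\glcomp{G}_i$. Functoriality is clear since morphisms of $\infty$-groupoids in $\Ab$ commute with sources, targets, and are group homomorphisms, hence restrict to the kernels.

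\medskip

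\noindent\textbf{Key steps, in order.} First I would construct $\glcomp{(-)}$ and verify it is a well-defined functor $\wgpd(\Ab) \to C_{\ge 0}(\Ab)$ (the short computation above). Second, starting from a complex $C$ and forming the associated $G$ of paragraph \ref{paragr:bourn}, I compute $\glcomp{G}$: a generic $i$-arrow is $(x_i,\dots,x_0) \in C_i \oplus \dots \oplus C_0$, its source is $(x_{i-1},\dots,x_0)$, so the kernel of $\Gls{i}$ consists of tuples $(x_i,0,\dots,0)$, giving a canonical isomorphism $\glcomp{G}_i \cong C_i$; under this identification $\Glt{i}$ sends $(x_i,0,\dots,0)$ to $(d_i(x_i),0,\dots,0)$, so the differential is exactly $d_i$. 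This yields a natural isomorphism $\glcomp{(-)} \circ F \cong \mathrm{id}_{C_{\ge 0}(\Ab)}$ where $F$ denotes the functor of paragraph \ref{paragr:bourn}. Third, and this is the substantive direction, starting from an arbitrary $G \in \wgpd(\Ab)$ I must produce a natural isomorphism $F(\glcomp{G}) \cong G$. Concretely one builds, for each $i$, a group isomorphism $G_i \xrightarrow{\ \sim\ } \glcomp{G}_i \oplus \glcomp{G}_{i-1} \oplus \dots \oplus \glcomp{G}_0$ compatible with all structure maps.

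\medskip

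\noindent\textbf{The main obstacle.} The heart of the argument — and the step I expect to require genuine care — is the construction of this last isomorphism, i.e. showing every $i$-arrow of $G$ decomposes canonically into "kernel components" in lower dimensions. The natural recipe is recursive: given $u \in G_i$, its source $\Gls{i}(u) \in G_{i-1}$ is handled by the inductive decomposition, and one peels off the top component by replacing $u$ with $u \comp^i_{i-1} (\Glw{i}(\Glk{i}(\Gls{i}(u))))$ — the composite of $u$ with the $\comp^i_{i-1}$-inverse of the identity on its source — which is an $i$-arrow with source $0$, hence an element of $\glcomp{G}_i$. One then has to check: (a) this assignment is a group homomorphism, using that $\comp^i_{i-1} = +$ componentwise is forced by the abelian structure and the exchange law (exactly the Eckmann–Hilton phenomenon already exploited in Proposition \ref{prop:wgpdr_hi}); (b) it is bijective, with inverse reassembling a tuple $(a_i,\dots,a_0)$ by iterated $\comp$-composition of the (whiskered) $a_k$'s; and (c) the resulting family of isomorphisms is natural in $G$ and transports the source, target, identity, and composition maps of $G$ to those of $F(\glcomp{G})$ as spelled out in paragraph \ref{paragr:bourn}. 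Once these compatibilities are verified — they reduce, via the axioms (C1)–(C4) and the explicit formula for $\comp^i_j$, to bookkeeping on tuples — the two natural isomorphisms together establish that $F$ is an equivalence of categories.
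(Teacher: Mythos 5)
Your overall strategy is the one the paper follows: take as quasi-inverse the ``kernel of source'' complex $\glcomp{G}_i = \Ker(\Gls{i})$ with differential induced by $\Glt{i}$, observe that the composite on $C_{\ge 0}(\Ab)$ is the identity, and obtain $F(\glcomp{G}) \cong G$ from the splittings $G_i \cong \Ker\Gls{i} \oplus G_{i-1}$. Two points in your sketch need repair, though. First, your peeling formula does nothing: the $\comp^i_{i-1}$-inverse of the identity $\Glk{i-1}(\Gls{i}(u))$ is that identity itself, so $u \comp^i_{i-1} \Glk{i-1}(\Gls{i}(u)) = u$ by the unit axiom (and if you instead meant the additive inverse $-\Glk{i-1}(\Gls{i}(u))$, the composite is not even defined, since its target is $-\Gls{i}(u)$). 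The correct map is simply $u \mapsto u - \Glk{i-1}(\Gls{i}(u))$, i.e.\ the splitting of $0 \to \Ker\Gls{i} \to G_i \to G_{i-1} \to 0$ given by the section $\Glk{i-1}$ of $\Gls{i}$; this is a group homomorphism for free, with no appeal to Eckmann--Hilton, and it is exactly what the paper does.

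Second, the assertion that ``$\comp^i_{i-1} = +$ componentwise is forced by the abelian structure and the exchange law'' is false outside the $1$-reduced situation of la proposition \ref{prop:wgpdr_hi}: in the groupoid of le paragraphe \ref{paragr:bourn} one has $(x_i,\dots,x_0) \comp^i_j (y_i,\dots,y_0) = (x_i+y_i,\dots,x_{j+1}+y_{j+1},y_j,\dots,y_0)$, which differs from the componentwise sum by the unit $\Glk[i]{j}(\Glt[j]{i}(y_i,\dots,y_0))$. The one place where a genuine argument is needed in your step (c) is precisely to show that the composition of $G$, transported along the globular isomorphism, is forced to be this formula; what does the job is not the exchange law but the additivity of $\comp^i_j$ (it is a structure map of a groupoid object in $\Ab$) combined with the unit axiom (C3): split a composable pair as a sum of a pair with trivial components in degrees $\le j$ and a pair with trivial components in degrees $> j$, and each summand is computed by (C3). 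With these two corrections your outline coincides with the paper's proof.
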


\begin{proof}
Soit $G$ un \oo-groupoïde strict en groupes abéliens.
Posons $C_0 = G_0$ et $C_i = \Ker \Gls{i}$ pour $i \ge 1$.
Puisque le morphisme $\Gls{i} : G_i \to G_{i-1}$ admet $\Glk{i-1}$ comme
section, $G_i$ est canoniquement isomorphe à $C_i \oplus G_{i-1}$.
Ainsi, on a montré que l'ensemble globulaire sous-jacent à $G$ est isomorphe à
un \oo-graphe en groupes abéliens de la forme
\[
\xymatrix{
C_0 &
\ar@<.6ex>[l]^-{\Glt{1}}\ar@<-.6ex>[l]_-{\Gls{1}}  C_1 \oplus C_0 &
\ar@<.6ex>[l]^-{\Glt{2}}\ar@<-.6ex>[l]_-{\Gls{2}} \cdots &
\ar@<.6ex>[l]^-{\Glt{i}}\ar@<-.6ex>[l]_-{\Gls{i}} C_i \oplus C_{i-1} \oplus \cdots
\oplus C_0 & 
\ar@<.6ex>[l]^-{\Glt{i+1}}\ar@<-.6ex>[l]_-{\Gls{i+1}} \cdots
}.
\]
À travers cet isomorphisme, $\Gls{i}$ devient la projection canonique $C_i \oplus
G_{i-1} \to G_{i-1}$, et $\Glk{i-1}$ l'inclusion canonique $G_{i-1} \to C_i
\oplus G_{i-1}$. Puisque $\Glk{i-1}$ est également une section de
$\Glt{i}$, pour $(0, y)$ dans $C_i \oplus G_{i-1}$, on a
$\Glt{i}(0, y) = y$. Par ailleurs, l'identité $\Gls{i-1}\Glt{i} =
\Gls{i-1}\Gls{i}$ entraîne que, pour $(x, 0)$ dans $C_i \oplus G_{i-1}$,
on a $\Glt{i}(x, 0) = (d_i(x), 0)$, où $d_i$ est un morphisme de $C_i$ vers
$C_{i-1}$. Ainsi, $\Glt{i}$ est la somme de la projection canonique et de ce
morphisme $d_i$. L'identité $\Glt{i-1}\Glt{i} = \Glt{i-1}\Gls{i}$ implique
immédiatement qu'on a $d_{i-1}d_i = 0$. On a ainsi associé un complexe
$(C, d)$ à $G$.

On montre de même que si $g : G \to G'$ est un morphisme de \oo-groupoïdes
stricts en groupes abéliens, le morphisme $g_i : G_i \to G'_i$ se décompose
en une somme de deux morphismes $f_i \oplus g_{i-1} : C_i \oplus G_{i-1} \to
C'_i \oplus G'_{i-1}$ et que les morphismes $f_i$ définissent un morphisme
de complexes.

On a donc construit un foncteur $H : \wgpd(\Ab) \to C_{\ge 0}(\Ab)$. 
Montrons que celui-ci est un quasi-inverse du foncteur $K : C_{\ge 0}(\Ab)
\to \wgpd(\Ab)$ de l'énoncé. Il est évident que $HK$ est isomorphe à l'identité.
Montrons que $KH$ est isomorphe à l'identité. Soit donc $G$ un
\oo-groupoïde strict en groupes abéliens. On a déjà montré que $(KH)(G)$ et
$G$ ont des ensembles globulaires sous-jacents canoniquement isomorphes et
que cette isomorphisme est compatible aux unités. Pour conclure, il suffit
de montrer que ces données suffisent à déterminer les composition $\comp^i_j$.

Soient donc $C$ un complexe et $i > j \ge 0$ deux entiers. Soit $(x_i,
\dots, x_0, y_i, \dots, y_0)$ un élément de $G_i \times_{G_j} G_i$.
Rappelons que cela signifie que les relations
\[
x_j = d_{j+1}(y_{j+1}) + y_j\quad\text{et}\quad
x_i = y_i, \quad 0 \le i < j,
\]
sont satisfaites. On a alors
\[
\begin{split}
\MoveEqLeft (x_i, \dots, x_0) \comp^i_j (y_i, \dots, y_0) \\
& = 
(x_i, \dots, x_{j+1}, d_{j+1}(y_{j+1}) + y_{j}, y_{j-1}, \dots, y_0) \comp^i_j (y_i, \dots, y_0) \\
& = 
(x_i, \dots, x_{j+1}, 0, \dots 0) \comp^i_j (0,
\dots, 0)\,\, + \\
& \qquad\quad
(0, \dots, 0, d_{j+1}(y_{j+1}) + y_{j}, y_{j-1}, \dots, y_0) \comp^i_j (y_i, \dots, y_0) \\
& = 
(x_i, \dots, x_{j+1}, 0, \dots 0) +
(y_i, \dots, y_0) \\
& =
(x_i + y_i, \dots, x_{j+1} + y_{j+1}, y_j, \dots, y_0),
\end{split}
\]
où l'avant dernière égalité résulte des égalités
\[
\begin{split}
(0, \dots, 0) & = \Glk[i]{j}(\Gls[j]{i}(x_i, \dots, x_{j+1}, 0, \dots, 0)), \\
(0, \dots, 0, d_{j+1}(y_j) + y_j, y_{j-1}, \dots, y_0) & = 
\Glk[i]{j}(\Glt[j]{i}(y_i, \dots, y_0)),
\end{split}
\]
et de l'axiome des unités.
\end{proof}

\begin{prop}
Dans l'équivalence de catégories $C_{\ge0}(\Ab) \to \wgpd(\Ab)$ de la
proposition précédente, les équivalences faibles de \oo-groupoïdes et les
quasi-isomorphismes sont échangés.
\end{prop}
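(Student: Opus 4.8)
The plan is to use the explicit description of the equivalence $K : C_{\ge 0}(\Ab) \to \wgpd(\Ab)$ constructed in paragraph~\ref{paragr:bourn}, together with the quasi-inverse $H$ built in the proof of the preceding proposition. Since equivalences of categories are preserved under passing to any (not necessarily full) subcategory of maps, and since $K$ and $H$ are quasi-inverse, it suffices to check one direction: that $K$ sends quasi-isomorphisms to weak equivalences, and that $H$ sends weak equivalences to quasi-isomorphisms. In fact, because $K$ and $H$ are mutually inverse up to natural isomorphism, it is enough to prove, say, that $K$ preserves weak equivalences in both directions, i.e.\ that for a morphism $f : C \to C'$ of complexes, $f$ is a quasi-isomorphism if and only if $K(f)$ is a weak equivalence of \oo-groupoïdes stricts.

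First I would compute the homotopy groups of $G = K(C)$ in terms of the homology of $C$. Recall $G_i = C_i \oplus C_{i-1} \oplus \dots \oplus C_0$, the source $\Gls{i}$ is the projection forgetting the top summand, and the target $\Glt{i}$ is that projection plus $d_i$ on the $C_i$-component. Since $G$ is a \oo-groupoïde strict en groupes abéliens and in particular $1$-réduit is \emph{not} automatic here, but it is connexe: $\pi_0(G) = \overline{G_0}$, and two elements of $G_0 = C_0$ are homotope iff they differ by the image of $\Glt{1} - \Gls{1} = d_1$, so $\pi_0(G) \cong H_0(C)$, canonically and functorially. For $n \ge 1$ and an object $x$ of $G$, the group $\pi_n(G, x)$ is computed from $\PI_n(G)$: its elements are homotopy classes of $n$-flèches from $\Glk[n-1]{0}(x)$ to itself. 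An $n$-flèche with source and target $\Glk[n-1]{0}(x)$ is an element $(z_n, z_{n-1}, \dots, z_0) \in G_n$ whose image under $\Gls[n-1]{n} = \Glt[n-1]{n}$ equals the fixed $(n-1)$-truncation of $x$; writing things out, the ``new'' coordinate $z_n$ must satisfy $d_n(z_n) = 0$, i.e.\ $z_n \in \Ker d_n = Z_n(C)$, while the lower coordinates are pinned down by $x$. Two such $n$-flèches are homotope iff their $z_n$-coordinates differ by something in $\Img d_{n+1}$. Hence $\pi_n(G, x) \cong H_n(C)$ for all $n \ge 1$, canonically and functorially in $C$ — and in particular independent of the basepoint $x$, consistent with connectedness. (The group structure on $\pi_n$, which is the one induced by $\comp^n_{n-1}$, matches the additive structure on $H_n(C)$ by the formula for $\comp^i_j$ with $j = n-1$, which just adds the top coordinates.)

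Given these natural isomorphisms $\pi_0(K(C)) \cong H_0(C)$ and $\pi_n(K(C), x) \cong H_n(C)$ for $n \ge 1$, the conclusion is immediate: for $f : C \to C'$, the maps $\pi_0(K(f))$ and $\pi_n(K(f), x)$ correspond under these isomorphisms to $H_0(f)$ and $H_n(f)$, so $K(f)$ is a weak equivalence (all these maps bijective/isomorphisms) if and only if $f$ is a quasi-isomorphism. Since $K$ is an equivalence and $H$ a quasi-inverse, the same correspondence shows $H$ exchanges weak equivalences and quasi-isomorphisms, which is the statement. The only genuinely delicate point is the bookkeeping in the second step: correctly identifying which coordinates of an element of $G_n$ are constrained by the choice of basepoint $\Glk[n-1]{0}(x)$ and the globular relations, and checking that the residual freedom is exactly $Z_n(C)$ modulo $B_n(C)$. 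This is a finite, if somewhat fiddly, unwinding of the definitions in paragraph~\ref{paragr:bourn}; once it is done, everything else is formal. I would present it by first treating $\pi_0$, then a general $n \ge 1$, keeping the basepoint at the unique-looking object coming from $0 \in C_0$ (legitimate since $K(C)$ is connexe, so all basepoints give isomorphic homotopy groups).
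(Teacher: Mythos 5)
Your proposal is correct and is essentially the paper's proof seen from the other side of the equivalence: you compute $\pi_0(K(C)) \cong H_0(C)$ and $\pi_n(K(C),x) \cong H_n(C)$ directly from the explicit formulas of the functor $K$, where the paper performs the identical computation on an object $G$ of $\wgpd(\Ab)$, reducing to the basepoint $0$ via the translation $f \mapsto f - \Glk[n-1]{0}(x)$, and both arguments conclude by naturality of these isomorphisms. One small slip: $K(C)$ is \emph{not} connected in general (its $\pi_0$ is $H_0(C)$), so your appeal to connectedness to fix the basepoint is unjustified, but this is harmless because your computation of $\pi_n(K(C),x)$ is carried out at an arbitrary object $x$ and its outcome $H_n(C)$ is visibly independent of $x$.
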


\begin{proof}
Soit $G$ un \oo-groupoïde strict en groupes abéliens et $n \ge 1$. Rappelons
qu'on note $\sim_n$ la relation d'équivalence d'homotopie des $n$-flèches.
Si $x$ est un objet de $G$, l'application $G_n \to G_n$, qui à $f$ associe
$f - \Glk[n-1]{0}(x)$, induit un isomorphisme de groupes de $\pi_n(G, x)$
vers $\pi_n(G, 0)$. Par ailleurs, on a
\begin{align*}
\pi_n(G, 0) 
& \simeq \{f \in G_n; \Gls{n}(f) = \Glt{n}(f) = 0 \}/\sim_n \\
& = \{f \in G_n; \Gls{n}(f) = \Glt{n}(f) = 0 \}/\{f \in G_n; f \sim_n
0\} \\
& = \{f \in G_n; \Gls{n}(f) = \Glt{n}(f) = 0 \}
/\{\Glt{n+1}(h); h \in G_{n+1}, \Gls{n+1}(h) = 0\} \\
& \simeq \{(f, 0) \in C_n \oplus G_{n-1}; d_n(f) = 0\}/\{d_{n+1}(h); (h, 0) \in
C_{n+1}\oplus G_n\} \\
& \simeq \{f \in C_n; d_n(f) = 0\}/\{d_{n+1}(h); h \in C_{n+1}\} \\
& = \Ker(d_n)/\Im(d_{n+1}) \\
& = H_n(C(G)).
\end{align*}
De même, on montre que $\pi_0(G) \simeq H_0(C(G))$.

La proposition résulte de la naturalité de ces isomorphismes.
\end{proof}

\begin{coro}
L'équivalence de catégories $C_{\ge0}(\Ab) \to \wgpd(\Ab)$ induit une
équivalence de catégories $D_{\ge0}(\Ab) \to \Ho(\wgpd(\Ab))$.
\end{coro}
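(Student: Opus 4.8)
Le point de départ est la proposition précédente, qui affirme que l'équivalence de catégories $K : C_{\ge 0}(\Ab) \to \wgpd(\Ab)$ de l'énoncé échange les quasi-isomorphismes et les équivalences faibles de \oo-groupoïdes stricts ; autrement dit, un morphisme $f$ de $C_{\ge 0}(\Ab)$ est un quasi-isomorphisme si et seulement si $K(f)$ est une équivalence faible. La stratégie consiste alors à invoquer la propriété universelle de la localisation de Gabriel-Zisman pour descendre $K$, son quasi-inverse $H : \wgpd(\Ab) \to C_{\ge 0}(\Ab)$ (construit dans la preuve de la proposition) et les isomorphismes naturels $HK \simeq \mathrm{id}$, $KH \simeq \mathrm{id}$ aux catégories homotopiques.

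Concrètement, je procéderais comme suit. Premièrement, puisque $K$ préserve les quasi-isomorphismes, le composé de $K$ avec le foncteur de localisation $\wgpd(\Ab) \to \Ho(\wgpd(\Ab))$ inverse les quasi-isomorphismes, donc se factorise de façon unique par un foncteur $\overline{K} : D_{\ge 0}(\Ab) \to \Ho(\wgpd(\Ab))$. Deuxièmement, je vérifierais que $H$ préserve les équivalences faibles : si $f$ est une équivalence faible de $\wgpd(\Ab)$, l'isomorphisme naturel $KH \simeq \mathrm{id}$ fournit un carré commutatif à flèches verticales inversibles reliant $K(H(f))$ à $f$, donc $K(H(f))$ est une équivalence faible, et la proposition précédente entraîne alors que $H(f)$ est un quasi-isomorphisme ; ainsi $H$ induit un foncteur $\overline{H} : \Ho(\wgpd(\Ab)) \to D_{\ge 0}(\Ab)$. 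Troisièmement, j'utiliserais le fait qu'une transformation naturelle entre deux foncteurs qui inversent tous deux les équivalences faibles induit une transformation naturelle entre les foncteurs qu'ils induisent sur la localisation ; appliqué à $HK \simeq \mathrm{id}$ et $KH \simeq \mathrm{id}$, cela donne des isomorphismes naturels $\overline{H}\,\overline{K} \simeq \mathrm{id}_{D_{\ge 0}(\Ab)}$ et $\overline{K}\,\overline{H} \simeq \mathrm{id}_{\Ho(\wgpd(\Ab))}$, ce qui montre que $\overline{K}$ est une équivalence de catégories.

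Il n'y a pas ici de véritable obstacle : tout l'argument est formel une fois connue la proposition précédente, le seul point demandant une (courte) vérification étant que $H$ préserve les équivalences faibles. On peut d'ailleurs résumer la preuve par l'énoncé général suivant, qu'il vaut la peine de dégager : une équivalence de catégories échangeant deux classes de morphismes induit une équivalence entre les localisations correspondantes.
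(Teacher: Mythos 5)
Votre argument est correct et correspond exactement à celui que le texte sous-entend : le corollaire y est énoncé sans démonstration, précisément parce qu'il s'agit de la conséquence formelle que vous explicitez (une équivalence de catégories échangeant les deux classes de morphismes descend, avec son quasi-inverse et les isomorphismes naturels, aux localisations). Votre vérification que $H$ préserve les équivalences faibles via $KH \simeq \mathrm{id}$ et le sens réciproque de la proposition est le seul point non trivial, et elle est juste.
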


\begin{rem}
Soit $k \ge 0$. L'équivalence de catégories $C_{\ge 0}(\Ab) \to \wgpd(\Ab)$
se restreint en une équivalence de catégories $C_{\ge k}(\Ab) \to \wgpd_{\ge
k}(\Ab)$ et celle-ci induit une équivalence de catégories $D_{\ge k}(\Ab)
\to \Ho(\wgpd_{\ge k}(\Ab))$.
\end{rem}

\begin{coro}\label{coro:equiv_wgpdr_C2Ab}
On a une équivalence de catégories canonique 
$\wgpdr \to C_{\ge 2}(\Ab)$. Celle-ci induit une équivalence de catégories
$\Ho(\wgpdr) \to D_{\ge 2}(\Ab)$.
\end{coro}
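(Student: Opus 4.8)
The plan is to realize the equivalence $\wgpdr \to C_{\ge 2}(\Ab)$ as a composite of functors already at our disposal. By the remark preceding the statement, the equivalence of categories $C_{\ge 0}(\Ab) \to \wgpd(\Ab)$ constructed in paragraph \ref{paragr:bourn} restricts to an equivalence $C_{\ge 2}(\Ab) \to \wgpdr(\Ab)$ onto the category of $1$-reduced strict $\infty$-groupoids in abelian groups; equivalently, the explicit quasi-inverse $H : \wgpd(\Ab) \to C_{\ge 0}(\Ab)$ built in the proof of that proposition restricts to an equivalence $H : \wgpdr(\Ab) \to C_{\ge 2}(\Ab)$. By proposition \ref{prop:wgpdr_hi}, the forgetful functor $U : \wgpdr(\Ab) \to \wgpdr$ is moreover an \emph{isomorphism} of categories. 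I would then define the desired equivalence as the composite $\wgpdr \xrightarrow{U^{-1}} \wgpdr(\Ab) \xrightarrow{H} C_{\ge 2}(\Ab)$; it is canonical since each of its two factors is. This proves the first assertion.

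For the second assertion, I would show that this composite exchanges the quasi-isomorphisms of $C_{\ge 2}(\Ab)$ with the weak equivalences of $\wgpdr$, and then invoke the universal property of the Gabriel--Zisman localization: a functor which is an equivalence of categories and which identifies two distinguished classes of arrows induces an equivalence of the associated homotopy categories, whence $\Ho(\wgpdr) \to \Ho(C_{\ge 2}(\Ab)) = D_{\ge 2}(\Ab)$ (and the reverse arrow $D_{\ge 2}(\Ab) \to \Ho(\wgpdr)$). That the classes match combines two inputs. First, the proposition asserting that under $C_{\ge 0}(\Ab) \to \wgpd(\Ab)$ the quasi-isomorphisms and the weak equivalences of strict $\infty$-groupoids in abelian groups are exchanged; restricted to homological degree $\ge 2$ it tells us that $H$ carries weak equivalences of $\wgpdr(\Ab)$ exactly onto quasi-isomorphisms. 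Second, the isomorphism $U$ of proposition \ref{prop:wgpdr_hi} sits, by construction, in a commutative triangle with the forgetful functor $\wgpdr(\Ab) \to \wgpd$ and the inclusion $\wgpdr \to \wgpd$; since the homotopy groups of an object of $\wgpd(\Ab)$, hence the notion of weak equivalence there, are by definition those of the underlying strict $\infty$-groupoid, the functor $U$ matches weak equivalences on both sides.

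The only genuinely non-formal point I would take care to spell out is this last compatibility of $U$ with the weak equivalences: it rests on the observation that weak equivalence in $\wgpd(\Ab)$ is detected on underlying strict $\infty$-groupoids, together with the fact — already visible in the proof of proposition \ref{prop:wgpdr_hi} — that the abelian group structure on a $1$-reduced strict $\infty$-groupoid is merely reconstructed from the operations $\comp^i_0$, so that $U^{-1}$ leaves the underlying $\infty$-groupoid untouched. I do not expect any real obstacle: once the classes of arrows are seen to correspond, the conclusion is the purely formal statement that the composite of equivalences is an equivalence and that localization is functorial on class-preserving equivalences, and no further computation is needed.
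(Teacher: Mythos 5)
Your proposal is correct and follows essentially the same route as the paper, which simply combines the remark that the equivalence $C_{\ge 0}(\Ab) \to \wgpd(\Ab)$ restricts to $C_{\ge 2}(\Ab) \to \wgpdr(\Ab)$ with the isomorphism $\wgpdr(\Ab) \to \wgpdr$ of proposition \ref{prop:wgpdr_hi}. The paper's proof is a one-line citation of these two facts; you have merely spelled out the composition and the (correct) verification that weak equivalences and quasi-isomorphisms correspond under it.
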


\begin{proof}
Cela résulte de la remarque ci-dessus et de la proposition
\ref{prop:wgpdr_hi}.
\end{proof}

\begin{rem}
D'après la remarque \ref{rem:equiv_sc_red} et le corollaire précédent, on a un
zigzag canonique d'équivalences de catégories
\[ \Ho(\wgpdsc) \leftarrow \Ho(\wgpdr) \rightarrow D_{\ge 2}(\Ab). \]
\end{rem}

\begin{paragr}
Soient $A$ un groupe abélien et $n$ un entier positif. On notera $\CK{A}{n}$
le complexe homologique de groupes abéliens concentré en degré $n$ de valeur
$A$. Si $n \ge 2$, on notera $\GK{A}{n}$ le \oo-groupoïde strict image de
$\CK{A}{n}$ dans l'équivalence de catégories du corollaire ci-dessus.
On peut décrire explicitement $\GK{A}{n}$ de la manière suivante :
\begin{itemize}
\item l'ensemble des $i$-flèches de $\GK{A}{n}$ est un singleton qu'on notera $0$
  pour $i < n$, et est $A$ pour $i \ge n$ ;
\item les applications $\Gls{i}$ et $\Glt{i}$ sont égales et valent
l'identité $\id{0}$ pour $i$ entre
$1$ et $n - 1$, la projection $A \to 0$ pour $i = n$ et l'identité $\id{A}$
pour $i \ge n + 1$ ;
\item l'application $\Glk{i}$ vaut $\id{0}$ pour $i$ entre $0$ et $n - 2$,
l'application $0 \to A$ correspondant à l'élément neutre de $A$ pour $i = n
- 1$, et $\id{A}$ pour $i \ge n$ ;
\item pour $i > j \ge 0$, l'application $\comp_j^i$ vaut $\id{0}$ pour
$i < n$, l'addition $A \times A \to A$ pour $i \ge n$ et $j < n$, et vaut
l'identité $\id{A}$ sinon (on a $\GK{A}{n}_i \times_{\GK{A}{n}_j} \GK{A}{n}_i =
A \times_A A \simeq A$ si $j \ge n$).
\end{itemize}

Il est immédiat que $\GK{A}{n}$ est simplement connexe et que si on note $\ast$
son unique objet, on a pour tout entier $k \ge 2$,
\[
\pi_k(\GK{A}{n}, \ast) =
\begin{cases}
A & \text{si $k = n$}, \\
0 & \text{sinon}.
\end{cases}
\]

On peut définir de manière analogue des \oo-groupoïdes stricts $\GK{E}{0}$ pour
$E$ un ensemble, et $\GK{G}{1}$ pour $G$ un groupe. Le \oo-groupoïde
$\GK{E}{0}$ a pour composantes connexes $E$, et pour tout objet $x$ de
$\GK{E}{0}$ et tout $k \ge 1$, le groupe $\pi_k(\GK{E}{0}, x)$ est trivial.
Quant à $\GK{G}{1}$, il possède un unique objet $\ast$ (il est en particulier
connexe), et vérifie pour tout $k \ge 1$,
\[
\pi_k(\GK{G}{1}, \ast) =
\begin{cases}
G & \text{si $k = 1$}, \\
0 & \text{sinon}.
\end{cases}
\]

On appellera les \oo-groupoïdes définis dans ce paragraphe les
\ndef{\oo-groupoïdes d'Eilenberg-Mac Lane}.
\end{paragr}

\begin{prop}
Soit $C$ un complexe dans $C_{\ge 2}(\Ab)$. Alors, dans $D_{\ge 2}(\Ab)$,
$C$ est isomorphe (non canoniquement) à $\prod_{n \ge 2} \CK{H_n(C)}{n}$.
\end{prop}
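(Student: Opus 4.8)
L'objectif est de montrer que, dans $D_{\ge 2}(\Ab)$, le complexe $C$ est isomorphe \`a $\prod_{n \ge 2}\CK{H_n(C)}{n}$, en construisant explicitement un tel isomorphisme. Remarquons d'abord que ce dernier complexe n'est autre que le complexe ayant $H_n(C)$ en degr\'e $n$ et toutes les diff\'erentielles nulles (en chaque degr\'e, un seul facteur contribue au produit), et qu'il est \'egal au coproduit correspondant ; il suffit donc de construire un quasi-isomorphisme, depuis un complexe quasi-isomorphe \`a $C$, vers ce complexe.

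\emph{R\'eduction au cas de termes libres.} Puisque $\Z$ est un anneau principal, tout objet de $C_{\ge 2}(\Ab)$ est quasi-isomorphe \`a un complexe \`a termes libres, concentr\'e en degr\'es $\ge 2$ (on peut prendre le complexe total d'une r\'esolution libre de Cartan-Eilenberg de $C$, qui est concentr\'e en degr\'es $\ge 2$ puisque les degr\'es de r\'esolution sont positifs et les degr\'es homologiques de $C$ au moins \'egaux \`a $2$). On supposera donc $C_n$ libre pour tout $n$.

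\emph{Construction du quasi-isomorphisme.} Pour $n \ge 2$, posons $Z_n = \Ker(d_n : C_n \to C_{n-1})$ et $B_n = \Im(d_{n+1} : C_{n+1} \to C_n)$, de sorte que $H_n(C) = Z_n / B_n$ et $C_n / Z_n \simeq B_{n-1}$. Comme sous-groupe du groupe ab\'elien libre $C_{n-1}$, le groupe $B_{n-1}$ est libre ; on a donc $\Ext^1_\Z(B_{n-1}, H_n(C)) = 0$, et la suite exacte longue d\'eduite de $0 \to Z_n \to C_n \to C_n/Z_n \to 0$ montre que l'application de restriction $\Hom_\Z(C_n, H_n(C)) \to \Hom_\Z(Z_n, H_n(C))$ est surjective. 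Choisissons un morphisme $\phi_n : C_n \to H_n(C)$ dont la restriction \`a $Z_n$ soit la projection canonique $Z_n \to Z_n/B_n = H_n(C)$. Alors $\phi = (\phi_n)_{n \ge 2}$ est un morphisme de complexes de $C$ vers $\prod_{n \ge 2}\CK{H_n(C)}{n}$ : ce dernier a des diff\'erentielles nulles, et comme $d_n(C_n) = B_{n-1} \subseteq Z_{n-1}$ tandis que $\phi_{n-1}$ annule $B_{n-1}$, on a $\phi_{n-1} \circ d_n = 0$. De plus, $\phi$ induit l'identit\'e sur $H_n(C)$ pour tout $n$ (sur un cycle $z \in Z_n$, on a $\phi_n(z) = [z]$), donc $\phi$ est un quasi-isomorphisme, et partant un isomorphisme dans $D_{\ge 2}(\Ab)$.

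La seule \'etape qui ne soit pas formelle est l'avant-derni\`ere, et le point essentiel y est l'utilisation du fait que $\Z$ est un anneau principal (de mani\`ere \'equivalente, h\'er\'editaire) : c'est ce qui rend $B_{n-1}$ libre et $\Ext^1_\Z(B_{n-1},-)$ nul. La r\'eduction au cas libre est r\'eellement n\'ecessaire : pour un complexe $C$ quelconque, le sous-groupe $B_{n-1} \subseteq C_{n-1}$ n'est pas n\'ecessairement projectif, et la projection $Z_n \to H_n(C)$ peut alors ne pas s'\'etendre \`a $C_n$. La non-canonicit\'e de l'isomorphisme obtenu provient du choix de la r\'esolution libre et des sections $\phi_n$.
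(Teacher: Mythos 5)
Votre preuve est correcte, mais elle suit une route sensiblement différente de celle du texte. Le texte travaille directement dans $D_{\ge 2}(\Ab)$ avec sa structure triangulée : pour chaque $n$, le triangle de troncation $\tau^{}_{\ge n+1}(C) \to C \to \tau^{}_{\le n}(C) \to \tau^{}_{\ge n+1}(C)[1]$ est scindé parce que le morphisme de connexion, allant d'un complexe concentré en degrés $\le n$ vers un complexe concentré en degrés $\ge n+2$, est nul ($\Ext^2_\Ab = 0$) ; la section $\tau^{}_{\le n}(C) \to C$ composée avec l'inclusion $H_n(C)[n] \to \tau^{}_{\le n}(C)$ fournit un morphisme de $D_{\ge 2}(\Ab)$ (non représenté par un morphisme de complexes) induisant un isomorphisme sur $H_n$, et la somme directe sur $n$ donne l'isomorphisme voulu. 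Vous construisez au contraire un quasi-isomorphisme explicite au niveau des complexes, mais dans l'autre sens : après remplacement de $C$ par un complexe à termes libres concentré en degrés $\ge 2$, l'hypothèse que tout sous-groupe d'un groupe abélien libre est libre rend $\Ext^1_\Z(C_n/Z_n, H_n(C))$ nul, ce qui permet de prolonger la projection $Z_n \to H_n(C)$ en $\phi_n : C_n \to H_n(C)$ ; la vérification que $(\phi_n)$ est un morphisme de complexes induisant l'identité en homologie est exacte, y compris votre remarque que le remplacement libre est réellement nécessaire (pour $C$ quelconque, $B_{n-1}$ n'est pas projectif en général). Les deux arguments reposent au fond sur le fait que $\Ab$ est de dimension homologique $1$, mais l'argument du texte est plus court une fois la machinerie triangulée acquise et s'étend tel quel à toute catégorie abélienne où les $\Ext^2$ pertinents s'annulent, tandis que le vôtre est plus élémentaire et explicite (aucune structure triangulée, un vrai quasi-isomorphisme de complexes), au prix de l'étape de résolution libre et du théorème sur les sous-groupes des groupes abéliens libres.
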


\begin{proof}
Nous allons utiliser la structure de catégorie triangulée de $D_{\ge
2}(\Ab)$ munie de l'auto-équivalence de catégories qui à un complexe $C$ associe le
complexe $C[1]$ défini par $C[1]_i = C_{i-1}$.

Soit $C$ un complexe dans $C_{\ge2}(\Ab)$. Notons $d_i : C_i \to C_{i-1}$ sa
différentielle. Pour tout entier $k$, on notera
\[
\setlength{\arraycolsep}{2pt}
\setcounter{MaxMatrixCols}{20}
\begin{matrix}
\tau^{}_{\ge k}(C) & = \quad \cdots & \to & C_{k+2} & \to & C_{k+1} 
& \to & \Ker d_k & \to & 0       & \to & 0        & \to & \cdots &\\
\widetilde{\tau}^{}_{\ge k}(C) & = \quad
\cdots & \to & C_{k+2} & \to & C_{k+1} 
& \to & X_k & \to & \Im d_k   & \to & 0       & \to & \cdots & \\
\tau^{}_{\le k}(C) & = \quad
\cdots  & \to & 0        & \to & 0 
& \to & \Coker d_{k+1}  & \to & C_{k-1}  & \to & C_{k-2}  & \to & \cdots & .
\end{matrix}
\]
Fixons $n \ge 2$. On a une suite exacte 
\[ 0 \to \widetilde{\tau}^{}_{\ge n+1}(C) \to C \to \tau^{}_{\le n}(C) \to 0 \]
dans $C_{\ge 2}(\Ab)$.
Par ailleurs, le morphisme canonique $\tau^{}_{\ge n+1}(C) \to \widetilde{\tau}^{}_{\ge
n+1}(C)$ est un quasi-isomorphisme. On dispose donc dans $D_{\ge 2}(\Ab)$ d'un
triangle distingué
\[ \tau^{}_{\ge n+1}(C) \to C \to \tau^{}_{\le n}(C) \to \tau^{}_{\ge
n+1}(C)[1] . \]
Or tout morphisme, d'un complexe concentré en degré inférieur ou égal à $n$,
vers un complexe concentré en degré supérieur ou égal à $n + 2$, est nul dans
$D_{\ge 2}(\Ab)$ car les $\Ext^{(n+2)-n}_\Ab = \Ext^2_\Ab$ sont triviaux.
Ainsi, le morphisme de connexion de $\tau^{}_{\le n}(C)$ vers $\tau^{}_{\ge n+1}(C)[1] =
\tau^{}_{\ge n+2}(C[1])$ est nul.  Par
conséquent, le triangle ci-dessus est scindé et on dispose donc d'une
section $\tau^{}_{\le n}(C) \to C$. En composant l'inclusion canonique
$H_n(C)[n] \to \tau^{}_{\le n}(C)$ et cette section, on obtient un
morphisme \[ H_n(C)[n] \to C \] de $D_{\ge 2}(\Ab)$ qui induit un
isomorphisme sur le $H_n$.

En passant à la somme directe, on obtient un morphisme
\[ \bigoplus_{n \ge 2} H_n(C)[n] \to C \]
de $D_{\ge 2}(\Ab)$.
Ce morphisme induit des isomorphismes sur tous les $H_n$ et est donc un
isomorphisme. Par ailleurs, on a $\bigoplus_{n \ge 2} H_n(C)[n] = \prod_{n
\ge 2} H_n(C)[n]$, d'où le résultat.
\end{proof}

\begin{rems}
\
\begin{myenumerate}
\item On a utilisé implicitement dans la preuve ci-dessus le fait que le
foncteur de localisation $C_{\ge 2}(\Ab) \to D_{\ge 2}(\Ab)$ commute aux
produits et aux sommes.
\item On démontre de manière similaire le résultat analogue dans $D_{\ge
k}(\Ab)$ pour $k$ quelconque.
\end{myenumerate}
\end{rems}

\begin{rem}
Un complexe $C$ dans $C_{\ge 2}(\Ab)$ est isomorphe à $\prod_{n \ge 2}
\CK{H_n(C)}{n}$ dans $C_{\ge 2}(\Ab)$ si et seulement si sa différentielle est
nulle. Dans l'équivalence de catégories entre $\wgpd_{\ge 2}$ et $C_{\ge
2}(\A)$, ces complexes correspondent aux \oo-groupoïdes stricts qui vérifient
$\Gls{n} = \Glt{n}$ pour $n \ge 1$. Ainsi, un \oo-groupoïde dans $\wgpdr$ est
isomorphe à un produit de \oo-groupoïdes d'Eilenberg-Mac Lane si et seulement
si ses sources et buts coïncident. Le résultat est également vrai dans $\wgpd$.
\end{rem}

\begin{thm}\label{thm:iso_sc_prod}
\
\begin{myenumerate}
\item 
Soit $G$ un \oo-groupoïde strict $1$-réduit d'unique objet $\ast$. On a
dans $\Ho(\wgpdr)$ un isomorphisme (non canonique)
\[ G \simeq \prod_{n \ge 2} \GK{\pi_n(G, \ast)}{n}. \]
\item
Soit $G$ un \oo-groupoïde strict simplement connexe. On a dans
$\Ho(\wgpd)$ un isomorphisme (non canonique)
\[ G \simeq \prod_{n \ge 2} \GK{\pi_n(G, x)}{n}, \]
où $x$ est un objet quelconque de $G$.
\end{myenumerate}
\end{thm}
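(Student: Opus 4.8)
The plan is to deduce both assertions from the equivalence of categories $\Ho(\wgpdr) \xrightarrow{\sim} \D_{\ge 2}(\Ab)$ of Corollary~\ref{coro:equiv_wgpdr_C2Ab} together with the preceding proposition, which shows that a complex $C$ of $C_{\ge 2}(\Ab)$ becomes, in $\D_{\ge 2}(\Ab)$, isomorphic to $\prod_{n \ge 2} \CK{H_n(C)}{n}$. For assertion~\emph{(1)}, I start from a $1$-reduced strict \oo-groupoid $G$ with unique object $\ast$. Through the canonical equivalence $\wgpdr \xrightarrow{\sim} C_{\ge 2}(\Ab)$ the object $G$ corresponds to a complex $C$; the preceding proposition then gives a (non-canonical) isomorphism $C \simeq \prod_{n \ge 2} \CK{H_n(C)}{n}$ in $\D_{\ge 2}(\Ab)$, and, since the equivalence $\Ho(\wgpdr) \xrightarrow{\sim} \D_{\ge 2}(\Ab)$ preserves products and sends $\GK{A}{n}$ to $\CK{A}{n}$ by the very definition of the \oo-groupoids $\GK{A}{n}$, transporting this isomorphism back yields $G \simeq \prod_{n \ge 2} \GK{H_n(C)}{n}$ in $\Ho(\wgpdr)$, the product on the right existing in $\Ho(\wgpdr)$ precisely because the corresponding product exists in $\D_{\ge 2}(\Ab)$. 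It then remains to identify $H_n(C)$ with $\pi_n(G, \ast)$, which follows either from the computation of homotopy groups in terms of homology carried out in the proof of the proposition that weak equivalences of \oo-groupoids correspond to quasi-isomorphisms, or, \emph{a posteriori}, by applying $\pi_n$ to the isomorphism just obtained and using that $\pi_n$ of a levelwise product is the product of the $\pi_n$.

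For assertion~\emph{(2)}, I reduce to~\emph{(1)} by means of Proposition~\ref{prop:equiv_wgpdr_wgpd2}. Given a simply connected strict \oo-groupoid $G$ and an object $x$, the construction in the proof of that proposition produces, from the choice of $x$, a $1$-reduced sub-\oo-groupoid $G'$ of $G$ whose unique object is $x$ and whose inclusion $u : G' \to G$ is a weak equivalence; in particular $u$ induces isomorphisms $\pi_n(G', \ast) \xrightarrow{\sim} \pi_n(G, x)$ for all $n \ge 1$ and becomes an isomorphism in $\Ho(\wgpd)$. Assertion~\emph{(1)} applied to $G'$ gives an isomorphism $G' \simeq \prod_{n \ge 2} \GK{\pi_n(G',\ast)}{n}$ in $\Ho(\wgpdr)$, whose target is the levelwise product $P$ of the $\GK{\pi_n(G,x)}{n}$, again a $1$-reduced strict \oo-groupoid. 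Its image under the canonical functor $\Ho(\wgpdr) \to \Ho(\wgpd)$, which is well defined since the inclusion $\wgpdr \hookrightarrow \wgpd$ preserves weak equivalences, gives $G' \simeq P$ in $\Ho(\wgpd)$; composing with the isomorphism induced by $u$ gives $G \simeq P = \prod_{n \ge 2} \GK{\pi_n(G,x)}{n}$ in $\Ho(\wgpd)$.

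The genuinely substantive point, namely the splitting that rests on the triviality of $\Ext^2_\Ab$, has already been dealt with in the preceding proposition, so what is left is essentially transport across the (by now standard) equivalences. Consequently the only things to be careful about are bookkeeping: that $\wgpdr \xrightarrow{\sim} C_{\ge 2}(\Ab)$ really does send $\GK{A}{n}$ to $\CK{A}{n}$ and $G$ to a complex with the expected homology (this merely unwinds the definitions and the earlier propositions), and that the product $\prod_{n \ge 2}\GK{A_n}{n}$ occurring in the statement, understood as the levelwise product and \emph{a priori} formed in $\wgpdr$, is compatible with all the equivalences and localizations in play, for which one uses that an equivalence of categories preserves products and that the localization $C_{\ge 2}(\Ab) \to \D_{\ge 2}(\Ab)$ preserves products, as already noted above. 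I do not anticipate any real obstacle.
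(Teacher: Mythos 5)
Your argument is correct and follows essentially the same route as the paper: point~(1) is deduced from the splitting proposition in $\D_{\ge 2}(\Ab)$ transported through the equivalence of Corollaire~\ref{coro:equiv_wgpdr_C2Ab} (with $H_n(C)\simeq\pi_n(G,\ast)$ coming from the earlier comparison of homotopy groups and homology), and point~(2) by reduction to a $1$-reduced sub-\oo-groupo\"ide via Proposition~\ref{prop:equiv_wgpdr_wgpd2} together with the compatibility of the inclusion $\wgpdr\to\wgpd$ with (levelwise) products. The only nuance you sidestep by reading the product levelwise is the one the paper relegates to a remark, namely that the localization $\wgpd\to\Ho(\wgpd)$ commutes with products (via the Brown--Golasi\'nski structure), so that this reading agrees with the categorical product in $\Ho(\wgpd)$.
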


\begin{proof}
Le premier point est une conséquence immédiate de la proposition précédente et
de l'équivalence des catégories $\Ho(\wgpdr)$ et $\Ho(C_{\ge 2}(\Ab))$
(corollaire \ref{coro:equiv_wgpdr_C2Ab}).

Le second point résulte du fait que tout \oo-groupoïde strict simplement
connexe est faiblement équivalent à un \oo-groupoïde strict $1$-réduit
(proposition \ref{prop:equiv_wgpdr_wgpd2}) et du fait que le foncteur
d'inclusion $\wgpdr \to \wgpd$ commute aux produits.
\end{proof}

\begin{rem}
\forlang{A priori}, pour rendre la preuve du second point ci-dessus
correcte, il faut considérer le produit de \oo-groupoïdes d'Eilenberg-Mac
Lane apparaissant dans l'énoncé comme un produit dans $\wgpd$ (par
opposition à dans $\Ho(\wgpd)$).  Cette distinction n'a en fait pas lieu
d'être car le foncteur de localisation $\wgpd \to \Ho(\wgpd)$ commute aux
produits. Cela résulte immédiatement du fait que tous les \oo-groupoïdes
stricts sont fibrants pour la structure de catégorie de modèles de
Brown-Golasi\'nski.
\end{rem}

\section{Types d'homotopie et $\infty$-groupoïdes stricts}

\begin{paragr}
On appellera \ndef{foncteur de réalisation de Simpson} (la notion est inspirée
de \cite{Simpson}) la donnée d'un foncteur $Q : \wgpd \to \Top$ muni d'une
application
\[ e : G_0 \to Q(G) \]
naturelle en $G$, induisant une bijection
\[ \pi_0(G) \to \pi_0(Q(G)), \]
et d'isomorphismes
\[
\pi_n(G, x) \to \pi_n(Q(G), e(x)),\quad n\ge1,
\]
naturels en $(G, x)$. 

Dans la suite, on se donne un foncteur de réalisation de Simpson $Q$. On notera
$R : \wgpd \to \Hot$ le composé $pQ$ où $p$ est le foncteur de localisation
$\Top \to \Hot$.
\end{paragr}

\begin{prop}
Le foncteur $Q$ commute aux produits de $\infty$-groupoïdes d'Eilenberg-Mac
Lane à équivalence faible près. Plus précisément, pour tout ensemble $A_0$,
tout groupe $A_1$, et tous groupes abéliens $A_n$ pour $n \ge 2$, le morphisme
canonique
\[ Q\Big(\prod_{n\ge0}\GK{A_n}{n}\Big) \to \prod_{n\ge0} Q(\GK{A_n}{n}) \]
est une équivalence faible topologique.
\end{prop}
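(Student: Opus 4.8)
Le point de départ est que les deux membres sont connexes au sens le plus fort possible : le $\infty$-groupoïde $\prod_{n\ge0}\GK{A_n}{n}$ a pour composantes connexes $\pi_0(\GK{A_0}{0}) = A_0$, et ses groupes d'homotopie supérieurs en un point sont, par commutation du foncteur $\pi_k$ aux produits finis (ou plus généralement aux produits, puisque les groupes d'homotopie se calculent point par point), les produits des $\pi_k(\GK{A_n}{n})$. Par les calculs explicites faits dans le paragraphe définissant les $\GK{A_n}{n}$, on obtient
\[
\pi_0\Big(\prod_{n\ge0}\GK{A_n}{n}\Big) \simeq A_0,\qquad
\pi_k\Big(\prod_{n\ge0}\GK{A_n}{n}, x\Big) \simeq A_k,\quad k \ge 1,
\]
le second isomorphisme valant pour n'importe quel objet $x$ (il n'y a d'ailleurs de choix que pour la composante $A_0$). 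On ne dispose pas encore des résultats analogues du côté topologique, c'est précisément ce que l'on veut établir.

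L'idée est alors de calculer les groupes d'homotopie du membre de droite $\prod_{n\ge0} Q(\GK{A_n}{n})$ à l'aide de la structure de foncteur de réalisation de Simpson de $Q$. D'une part, $Q$ transforme $\pi_0$ et les $\pi_k$ de chaque $\GK{A_n}{n}$ en les groupes d'homotopie de $Q(\GK{A_n}{n})$ : on a donc $\pi_0(Q(\GK{A_0}{0})) \simeq A_0$, $\pi_0(Q(\GK{A_n}{n}))$ trivial pour $n \ge 1$, et $\pi_k(Q(\GK{A_n}{n}), e(x)) \simeq A_n$ si $k = n \ge 1$, trivial sinon. D'autre part, le foncteur de réalisation géométrique usuel sur $\Top$, donc tout passage à la catégorie homotopique, commute aux produits pour ce qui est du calcul des groupes d'homotopie (le groupe d'homotopie d'un produit est le produit des groupes d'homotopie, et de même pour $\pi_0$, car les sphères sont des objets cohérents et $\pi_k$ est corepresenté par $\Stop{k}$ dans $\Hot$). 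On obtient ainsi pour le membre de droite les mêmes groupes d'homotopie $A_k$ en chaque degré $k \ge 0$ — en prenant soin, pour $\pi_0$, d'observer que seul le facteur $n = 0$ contribue, et que pour les $\pi_k$ avec $k \ge 1$ en un point $e(x)$, chaque facteur $Q(\GK{A_n}{n})$ contribue par $A_k$ si $n = k$ et trivialement sinon.

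Il reste à vérifier que le morphisme canonique $Q(\prod_n \GK{A_n}{n}) \to \prod_n Q(\GK{A_n}{n})$ induit effectivement les isomorphismes annoncés sur les groupes d'homotopie, et pas seulement que source et but ont des groupes isomorphes. C'est ici que la \emph{naturalité} de la donnée du foncteur de réalisation de Simpson est essentielle : les isomorphismes $\pi_k(G, x) \to \pi_k(Q(G), e(x))$ sont naturels en $(G,x)$, donc compatibles aux projections $\prod_n \GK{A_n}{n} \to \GK{A_m}{m}$ d'un côté et $\prod_n Q(\GK{A_n}{n}) \to Q(\GK{A_m}{m})$ de l'autre. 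En combinant avec le fait que $\pi_k$ d'un produit (topologique ou de $\infty$-groupoïdes) est le produit des $\pi_k$, via les projections, on obtient un carré commutatif dont les trois autres flèches sont des isomorphismes, donc la quatrième — l'application induite par le morphisme canonique sur $\pi_k$ — aussi. Le même argument s'applique à $\pi_0$. La principale difficulté, d'ailleurs plutôt bookkeeping que conceptuelle, est donc de traiter correctement le cas $\pi_0$ (un seul facteur contribue et il faut le basepoint) et de bien invoquer la compatibilité naturelle de $e$ et des isomorphismes de $Q$ avec les projections ; une fois ceci posé, la conclusion est immédiate par le critère d'équivalence faible topologique rappelé dans le premier exemple.
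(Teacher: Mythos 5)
Votre preuve est correcte et suit essentiellement la m\^eme d\'emarche que celle du texte : projections sur chaque facteur, naturalit\'e des donn\'ees du foncteur de r\'ealisation de Simpson, calcul des groupes d'homotopie des $\GK{A_n}{n}$ et de leurs images par $Q$, puis conclusion par un diagramme commutatif dont les autres fl\`eches sont des isomorphismes. Le seul point laiss\'e implicite, que le texte explicite, est le passage des points base de la forme $e(x)$ \`a un point base arbitraire, lequel s'obtient parce que $e$ induit une bijection sur le $\pi_0$.
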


\begin{proof}
Fixons un entier positif $n$. On notera
\[ p_n : \prod_{n\ge0}\GK{A_n}{n} \to \GK{A_n}{n}
\quad\text{et}\quad
q_n : \prod_{n\ge0}Q(\GK{A_n}{n}) \to Q(\GK{A_n}{n})
 \]
les projections canoniques.
On dispose du triangle commutatif suivant :
\[
\xymatrix@C=1pc@R=2pc{
Q\Big(\prod_{n\ge0}\GK{A_n}{n}\Big)  \ar[rr]^{f} \ar[rd]_{Q(p_n)} & &
\prod_{n\ge0}Q(\GK{A_n}{n}) \ar[dl]^{q_n} \\
& Q(\GK{A_n}{n}) & & \text{.}
}
\]

Pour $n = 0$, le morphisme $p_0$ induit une bijection sur le $\pi_0$.  Il en
est donc de même de $Q(p_0)$. Par ailleurs, $q_0$ induit également une
bijection sur le $\pi_0$. Il en est donc de même de $f$.

Soit $n \ge 1$. Le morphisme $p_n$ induit un isomorphisme sur le $\pi_n$
pour tout point de base. Le foncteur $Q(p_n)$ induit donc un isomorphisme
sur le $\pi_n$ pour tout point de base de la forme $e(x)$, où $x$ est un
objet de $\prod_{n\ge0}\GK{A_n}{n}$. Puisque $e$ induit une bijection sur le
$\pi_0$, l'application $Q(p_n)$ induit un isomorphisme sur le $\pi_n$ pour
tout point de base. Par ailleurs, l'application $q_n$ induit également un
isomorphisme sur le $\pi_n$ pour tout point de base. On en déduit que c'est
également le cas de $f$, d'où le résultat.
\end{proof}

\begin{rem}
Le foncteur de localisation $\Top \to \Hot$ commute aux produits. Cela
résulte, par exemple, du fait que tout espace topologique est fibrant pour la
structure de catégorie de modèles usuelle sur $\Top$. La proposition précédente
peut donc se reformuler en disant que le foncteur $R : \wgpd \to \Hot$ commute
aux produits de \oo-groupoïdes d'Eilenberg-Mac Lane.
\end{rem}

\begin{thm}\label{thm:im_ess_sc}
Les types d'homotopie simplement connexes dans l'image essentielle du
foncteur $R$ sont exactement les produits d'espaces d'Eilenberg-Mac Lane.
\end{thm}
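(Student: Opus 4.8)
L'énoncé affirme une double inclusion. L'inclusion facile est celle-ci~: si $X$ est un produit d'espaces d'Eilenberg-Mac Lane, disons $X = \prod_{n \ge 2} K(A_n, n)$, avec les $A_n$ abéliens, alors $X$ est dans l'image essentielle de $R$. En effet, le $\infty$-groupoïde strict $G = \prod_{n \ge 2} \GK{A_n}{n}$ est simplement connexe, et par la proposition précédente sur la commutation de $Q$ aux produits de $\infty$-groupoïdes d'Eilenberg-Mac Lane, $Q(G)$ est faiblement équivalent à $\prod_{n \ge 2} Q(\GK{A_n}{n})$. Or chaque $Q(\GK{A_n}{n})$ est un espace d'Eilenberg-Mac Lane de type $K(A_n, n)$ par définition même d'un foncteur de réalisation de Simpson (ses groupes d'homotopie, calculés via les isomorphismes naturels, sont ceux de $\GK{A_n}{n}$, déjà explicités). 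Donc $R(G) = pQ(G)$ est isomorphe dans $\Hot$ à $\prod_{n \ge 2} K(A_n, n)$, ce qui montre l'inclusion voulue.

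Pour l'inclusion réciproque, je partirais d'un type d'homotopie simplement connexe $Y$ dans l'image essentielle de $R$~: il existe un $\infty$-groupoïde strict $G$ avec $R(G) \simeq Y$ dans $\Hot$. Puisque $R$ préserve les groupes d'homotopie (via $e$ et les isomorphismes naturels de la définition de foncteur de réalisation de Simpson), le fait que $Y$ soit simplement connexe entraîne que $G$ est simplement connexe~: plus précisément $\pi_0(G) \to \pi_0(Y)$ est une bijection (donc $G$ connexe) et $\pi_1(G, x) \to \pi_1(Y, e(x))$ est un isomorphisme pour tout $x$, donc $\pi_1(G, x)$ est trivial. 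On peut alors appliquer le théorème \ref{thm:iso_sc_prod}~: dans $\Ho(\wgpd)$, on a $G \simeq \prod_{n \ge 2} \GK{\pi_n(G, x)}{n}$ pour un objet $x$ de $G$, les $\pi_n(G, x)$ étant abéliens pour $n \ge 2$.

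Il reste à transporter cet isomorphisme de $\Ho(\wgpd)$ vers $\Hot$ par le foncteur $R$. Comme $R$ envoie les équivalences faibles de $\wgpd$ sur des isomorphismes de $\Hot$ (c'est exactement la condition imposée à un foncteur de réalisation de Simpson, combinée à la proposition \ref{prop:eqf_defs} qui assure que les équivalences faibles induisent des isomorphismes sur tous les groupes d'homotopie), le foncteur $R$ se factorise par $\Ho(\wgpd)$, d'où un isomorphisme dans $\Hot$ entre $R(G) = Y$ et $R\big(\prod_{n \ge 2} \GK{\pi_n(G, x)}{n}\big)$. Par la proposition précédente (commutation de $Q$, donc de $R$, aux produits de $\infty$-groupoïdes d'Eilenberg-Mac Lane), ce dernier est isomorphe dans $\Hot$ à $\prod_{n \ge 2} R(\GK{\pi_n(G, x)}{n})$, c'est-à-dire à $\prod_{n \ge 2} K(\pi_n(G, x), n)$, un produit d'espaces d'Eilenberg-Mac Lane. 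Donc $Y$ est un tel produit.

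L'obstacle principal, s'il y en a un, est purement formel~: il s'agit de vérifier soigneusement que $R$ passe bien au quotient par les équivalences faibles de $\wgpd$ — ce qui découle de la définition d'un foncteur de réalisation de Simpson plus de la caractérisation des équivalences faibles via les groupes d'homotopie — de sorte que l'isomorphisme ``non canonique'' du théorème \ref{thm:iso_sc_prod}, valable dans $\Ho(\wgpd)$, se lise effectivement dans $\Hot$ après application de $R$. Tout le reste est une concaténation des propositions déjà établies. Il faut aussi noter, comme dans la remarque suivant le théorème \ref{thm:iso_sc_prod}, que le produit dans l'énoncé de ce théorème peut se lire indifféremment dans $\wgpd$ ou dans $\Ho(\wgpd)$, ce qui permet de lui appliquer directement $Q$.
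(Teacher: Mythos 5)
Votre preuve est correcte et suit essentiellement la même démarche que celle du texte~: on ramène un type simplement connexe de l'image essentielle de $R$ à un $\infty$-groupoïde strict simplement connexe (car $R$ préserve les groupes d'homotopie), on applique le théorème \ref{thm:iso_sc_prod}, puis la proposition sur la commutation de $Q$ aux produits de $\infty$-groupoïdes d'Eilenberg-Mac Lane; votre factorisation de $R$ par $\Ho(\wgpd)$ remplace simplement le « zigzag d'équivalences faibles » du texte. Vous explicitez en outre l'inclusion réciproque (tout produit d'espaces d'Eilenberg-Mac Lane est atteint), que le texte laisse implicite dans la même proposition.
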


\begin{proof}
Un type d'homotopie simplement connexe provient d'un \oo-groupoïde strict
simplement connexe. Par le théorème \ref{thm:iso_sc_prod}, un tel
\oo-groupoïde est relié par un zigzag d'équivalences faibles à un produit de
\oo-groupoïdes d'Eilenberg-Mac Lane. On en déduit immédiatement le résultat par
la proposition précédente.
\end{proof}

\begin{prop}\label{prop:pas_prod_em}
Soit $X$ un CW-complexe connexe de dimension finie $n \ge 1$. Si $X$ a un groupe
d'homotopie $\pi_m(X)$ non trivial pour $m > n$, alors le type d'homotopie
de $X$ n'est pas un produit d'espaces d'Eilenberg-Mac Lane. En particulier,
les sphères de dimension $n$ pour $n \ge 2$ n'ont pas le type d'homotopie
d'un produit d'espaces d'Eilenberg-Mac Lane.
\end{prop}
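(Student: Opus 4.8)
The plan is to argue by contradiction, combining the homotopy-theoretic structure of a finite-dimensional CW-complex with the cohomological rigidity of products of Eilenberg-Mac Lane spaces. Suppose $X$ is a connected CW-complex of finite dimension $n \ge 1$ that has some nontrivial higher homotopy group $\pi_m(X)$ with $m > n$, and suppose for contradiction that $X$ has the homotopy type of a product $P = \prod_{i} K(A_i, d_i)$ of Eilenberg-Mac Lane spaces. First I would reduce to the simply connected (or at least to the $\pi_1$-controlled) situation: since $X$ is finite-dimensional, it has only finitely many nontrivial homotopy groups in the relevant range only if... no --- in fact the key point is the reverse, so I will instead work directly with cohomology with suitable coefficients. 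Because $X$ has dimension $n$, we have $H^k(X; M) = 0$ for all coefficient modules $M$ (local systems over $\pi_1 X$) and all $k > n$; this is the single structural fact about $X$ I will exploit.

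Next I would extract a contradiction from the existence of a nontrivial $\pi_m$, $m > n$. Let $m$ be minimal with $\pi_m(X) \ne 0$ among $m > n$; since $X$ is a product of Eilenberg-Mac Lane spaces, $\pi_m(X) = \prod_{d_i = m} A_i$, and at least one factor $K(A, m)$ with $A \ne 0$ appears in $P$ (here $A$ is abelian because $m \ge 2$, as $m > n \ge 1$). Projection $P \to K(A, m)$ is a retraction up to homotopy of a suitable factor, so the induced map on cohomology $H^*(K(A,m); M) \to H^*(P; M) = H^*(X; M)$ is (split) injective for appropriate coefficients $M$. The hard part will be choosing $M$ so that $K(A, m)$ has nonzero cohomology in some degree $> n$: I would take $M = A$ (or, if $A$ is not finitely generated or has torsion subtleties, a well-chosen quotient or a prime field) and use that $H^m(K(A,m); A) \supseteq \Hom(A, A) \ni \mathrm{id}_A \ne 0$, which already gives a nonzero class in degree $m > n$. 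Pulling this back along the split injection produces a nonzero class in $H^m(X; A)$ with $m > n$, contradicting the finite-dimensionality bound $H^{k}(X; M) = 0$ for $k > n$.

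For the final sentence about spheres, I would simply recall that $\Stop{n}$ is a finite CW-complex of dimension $n \ge 2$ and that $\pi_m(\Stop{n}) \ne 0$ for infinitely many $m > n$ (for instance $\pi_{n+k}(\Stop{n})$ is nontrivial for suitable $k \ge 1$ by classical computations --- already $\pi_3(\Stop{2}) = \Z$, and in general the stable stems are nonzero), so the hypothesis of the proposition is met and $\Stop{n}$ cannot have the homotopy type of a product of Eilenberg-Mac Lane spaces. The one technical point to handle carefully is the coefficient module over $\pi_1(X)$ when $X$ is not simply connected: if $\pi_1(X) \ne 1$ the factor $K(\pi_1 X, 1)$ appears in $P$ and one must check that the cohomology computation above can be carried out with a constant coefficient module $A$ pulled back through $P \to K(A, m) \to \mathrm{pt}$, which it can since that composite kills $\pi_1$; thus the local system is trivial and the dimension bound applies verbatim. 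I expect the coefficient-choice step --- ensuring $H^m(K(A,m); \text{coefficients}) \ne 0$ with those same coefficients making $H^{>n}(X) = 0$ --- to be the only place requiring genuine care; everything else is a diagram of split (co)fibrations.
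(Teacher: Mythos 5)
Your proof is correct and follows essentially the same strategy as the paper, merely dualized. The paper splits $X \simeq \K{\pi_m(X)}{m} \times Y$ and applies the K\"unneth formula in integral homology to inject $H_m(\K{\pi_m(X)}{m}) \otimes H_0(Y) \cong \pi_m(X)$ into $H_m(X)$, which vanishes because $m > n = \dim X$; you instead project onto a factor $\K{A}{m}$ with $A \neq 0$ and pull back the class $\mathrm{id}_A \in \Hom(A,A) \cong H^m(\K{A}{m};A)$ along the split injection $H^m(\K{A}{m};A) \to H^m(X;A)$ given by the retraction, contradicting $H^m(X;A)=0$ for $m > n$. Both arguments rest on the same two facts --- the dimension bound on the (co)homology of $X$ and the nontriviality of the degree-$m$ (co)homology of the Eilenberg-Mac Lane factor (Hurewicz in one case, Hurewicz plus universal coefficients in the other) --- so the difference is cosmetic: your retraction argument avoids K\"unneth, while the paper's homological version avoids any discussion of coefficients. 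Note that your closing worry about local systems is unnecessary: you only ever use constant coefficients $A$, and $H^k(X;M)=0$ for $k > \dim X$ holds for arbitrary (even twisted) coefficients, so no reduction involving $\pi_1$ is required. For the spheres, the single classical fact $\pi_3(\Sn{2})=\Z$ and $\pi_{n+1}(\Sn{n})=\Z/2\Z$ for $n\ge 3$ suffices, exactly as in the paper; invoking the nonvanishing of infinitely many stable stems is more than you need.
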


\begin{proof}
Si $X$ avait le type d'homotopie d'un tel produit, on aurait
\[
H_m(X) \simeq H_m\left(\prod_{1 \le k \le m+1} \K{\pi_k(X)}{k}\right).
\]
Écrivons
\[
X = \K{\pi_m(X)}{m} \times Y,\quad\text{où}\quad Y = \prod_{\substack{1 \le k
\le m+1\\k\neq m}} \K{\pi_k(X)}{k}.
\]
En appliquant la formule de Künneth à cette décomposition de $X$,
on obtient une injection de $H_m(\K{\pi_m(X)}{m}) \otimes H_0(Y) \cong
\pi_m(X)$ dans $H_m(X)$. Mais puisque $m > n$, $H_m(X)$ est trivial et il en
est donc de même de $\pi_m(X)$. Contradiction.

Ce résultat s'applique aux sphères de dimension $n \ge 2$ car
\[ 
\pi_3(\Sn{2}) = \Z
\quad\text{et}\quad
\pi_{n+1}(\Sn{n}) = \Z/2\Z,\quad n \ge 3.
\]
\end{proof}

\begin{coro}
Le foncteur $R$ n'est pas essentiellement surjectif.
\end{coro}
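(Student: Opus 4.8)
La preuve se r�duit � exhiber un type d'homotopie simplement connexe qui n'est pas dans l'image essentielle de $R$, et le candidat �vident est le type d'homotopie de la sph�re $\Sn{2}$ : celui-ci est simplement connexe puisque $\pi_1(\Sn{2})$ est trivial. Le plan est donc de combiner les deux r�sultats qui pr�c�dent imm�diatement, sans rien y ajouter.

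D'une part, la proposition \ref{prop:pas_prod_em} s'applique � $X = \Sn{2}$ : c'est un CW-complexe connexe de dimension finie $2$ dont le groupe d'homotopie $\pi_3(\Sn{2}) = \Z$ est non trivial, et donc son type d'homotopie n'est pas un produit d'espaces d'Eilenberg-Mac Lane. D'autre part, le th�or�me \ref{thm:im_ess_sc} affirme que les types d'homotopie simplement connexes dans l'image essentielle de $R$ sont exactement les produits d'espaces d'Eilenberg-Mac Lane. Le type d'homotopie de $\Sn{2}$ �tant simplement connexe mais n'�tant pas un tel produit, il n'appartient pas � l'image essentielle de $R$, lequel n'est donc pas essentiellement surjectif.

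Il n'y a pas d'obstacle v�ritable : tout le contenu a d�j� �t� �tabli, d'une part dans le th�or�me \ref{thm:im_ess_sc} (lui-m�me cons�quence du th�or�me \ref{thm:iso_sc_prod} et de l'analyse des foncteurs de r�alisation de Simpson), d'autre part dans la proposition \ref{prop:pas_prod_em}. Le seul point � surveiller est de s'assurer que l'hypoth�se de simple connexit� du th�or�me \ref{thm:im_ess_sc} est bien satisfaite par $\Sn{2}$, ce qui est imm�diat. On pourrait du reste remplacer $\Sn{2}$ par n'importe quelle sph�re $\Sn{n}$ avec $n \ge 2$, la derni�re assertion de la proposition \ref{prop:pas_prod_em} fournissant dans chaque cas un contre-exemple.
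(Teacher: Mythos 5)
Votre argument est correct et co\"incide avec celui, implicite, du texte : le corollaire d\'ecoule directement du th\'eor\`eme \ref{thm:im_ess_sc} et de la proposition \ref{prop:pas_prod_em}, la sph\`ere $\Sn{2}$ (simplement connexe mais non produit d'espaces d'Eilenberg-Mac Lane) fournissant le contre-exemple voulu. Rien \`a redire.
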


\begin{prop}\label{prop:img_ess_R}
L'image essentielle du foncteur $R$ est contenue dans la classe des espaces
dont chaque composante connexe a pour revêtement universel un
produit d'espaces d'Eilenberg-Mac Lane.
\end{prop}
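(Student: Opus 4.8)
Pour d\'emontrer cette proposition, le plan est de se ramener au th\'eor\`eme~\ref{thm:im_ess_sc} en montrant que le rev\^etement universel de chaque composante connexe d'un espace de l'image essentielle de $R$ est, \`a \'equivalence faible pr\`es, lui-m\^eme dans l'image essentielle de $R$ ; comme un tel rev\^etement universel est simplement connexe, le th\'eor\`eme~\ref{thm:im_ess_sc} donnera alors qu'il s'agit d'un produit d'espaces d'Eilenberg-Mac Lane.

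Concr\`etement, soit $G$ un \oo-groupo\"\i de strict et prenons $Q(G)$ pour repr\'esentant du type d'homotopie $R(G)$. Fixons une composante connexe de $Q(G)$ et choisissons un objet $x$ de $G$ tel que $e(x)$ appartienne \`a cette composante. On construit \`a partir de $x$ le sous-\oo-groupo\"\i de $G'$ de $G$ d\'efini exactement comme dans la preuve de la proposition~\ref{prop:equiv_wgpdr_wgpd2}, \`a savoir $G'_0 = \{x\}$, $G'_1 = \{\Glk[1]{0}(x)\}$ et, pour $n \ge 2$,
\[ G'_n = \{\, f \in G_n \mid \Gls[1]{n}(f) = \Glk[1]{0}(x) = \Glt[1]{n}(f) \,\}, \]
la seule diff\'erence avec \loccit \'etant qu'on ne suppose plus $G$ simplement connexe. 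On v\'erifie que $G'$ est bien un sous-\oo-groupo\"\i de strict de $G$ (les op\'erations de $G$ pr\'eservent la condition d\'efinissant les $G'_n$) ; il est $1$-r\'eduit, donc en particulier simplement connexe.

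Le point d\'elicat sera de montrer que l'inclusion $\iota : G' \hookrightarrow G$ induit, pour tout $n \ge 2$, un isomorphisme $\pi_n(G', \ast) \to \pi_n(G, x)$, o\`u $\ast$ d\'esigne l'unique objet de $G'$. Il s'agit d'une variante mineure du calcul fait dans la preuve de la proposition~\ref{prop:equiv_wgpdr_wgpd2} : toute $n$-fl\`eche de $G$ de source et but $\Glk[n-1]{0}(x)$ appartient automatiquement \`a $G'_n$, et toute $(n+1)$-fl\`eche de $G$ reliant deux telles $n$-fl\`eches appartient \`a $G'_{n+1}$ ; on en tire l'\'egalit\'e $\pi_n(G', \ast) = \pi_n(G, x)$ pour $n \ge 2$, alors que $\pi_0(G')$ et $\pi_1(G', \ast)$ sont triviaux. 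La proximit\'e avec un argument d\'ej\`a pr\'esent dans le texte fait que cette \'etape, bien que technique, ne r\'eclame aucune id\'ee nouvelle.

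Il restera \`a conclure. Le morphisme $Q(\iota) : Q(G') \to Q(G)$ envoie $e(x)$ sur $e(x)$ et part d'un espace connexe ; il se factorise donc par la composante connexe $C$ de $e(x)$ dans $Q(G)$. Par naturalit\'e des isomorphismes de comparaison de $Q$ et par le point d\'elicat, l'application $Q(G') \to C$ ainsi obtenue induit un isomorphisme sur les $\pi_n$ pour tout $n \ge 2$, et $Q(G')$ est simplement connexe ; elle se rel\`eve donc \`a travers le rev\^etement universel $\widetilde{C} \to C$ en une application $Q(G') \to \widetilde{C}$ qui est un isomorphisme sur tous les groupes d'homotopie, donc une \'equivalence faible. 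Ainsi $\widetilde{C} \simeq Q(G') \simeq R(G')$, et comme $R(G')$ est un type d'homotopie simplement connexe de l'image essentielle de $R$, le th\'eor\`eme~\ref{thm:im_ess_sc} assure que c'est un produit d'espaces d'Eilenberg-Mac Lane. Les manipulations topologiques utilis\'ees ici (factorisation par une composante connexe, rel\`evement d'une source simplement connexe, invariance faible du rev\^etement universel) sont enti\`erement standard.
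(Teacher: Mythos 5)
Votre d\'emarche co\"\i ncide pour l'essentiel avec celle du texte : restreindre $G$ au sous-\oo-groupo\"\i de $1$-r\'eduit $G'$ d\'etermin\'e par un objet $x$ (comme dans la proposition \ref{prop:equiv_wgpdr_wgpd2}), observer que l'inclusion induit des isomorphismes sur les $\pi_n$ pour $n \ge 2$, relever la r\'ealisation simplement connexe de $G'$ \`a travers un rev\^etement universel, et conclure par le th\'eor\`eme \ref{thm:im_ess_sc}. Toute la partie \oo-groupo\"\i dique de votre argument est correcte.

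Le point faible est l'\'etape topologique, que vous d\'eclarez \og enti\`erement standard \fg : vous appliquez la th\'eorie des rev\^etements directement aux espaces $Q(G)$ et $Q(G')$. Or $Q$ est un foncteur de r\'ealisation de Simpson arbitraire, et ces espaces n'ont aucune raison d'\^etre localement connexes par arcs ni semi-localement simplement connexes : la composante $C$ de $e(x)$ dans $Q(G)$ n'admet pas n\'ecessairement de rev\^etement universel, et le crit\`ere de rel\`evement d'une application $Q(G') \to C$ \`a travers $\widetilde{C} \to C$ exige en outre que la source soit connexe par arcs et localement connexe par arcs, ce qui n'est pas garanti non plus (il y a aussi un glissement implicite entre composantes connexes et composantes par arcs). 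C'est pr\'ecis\'ement pour contourner cela que la preuve du texte intercale des remplacements cellulaires : on choisit des fibrations triviales $r : X_{\mathrm{cw}} \to X = Q(G)$ et $r' : X'_{\mathrm{cw}} \to X' = Q(G')$ avec $X_{\mathrm{cw}}$ et $X'_{\mathrm{cw}}$ des CW-complexes, on prend le rev\^etement universel de $X_{\mathrm{cw}}$ (qui existe), et on rel\`eve $X'_{\mathrm{cw}}$ --- qui est un CW-complexe simplement connexe, donc cofibrant et localement connexe par arcs --- \`a travers $r$ puis \`a travers ce rev\^etement ; c'est aussi pour ce repr\'esentant cellulaire que l'\'enonc\'e sur le rev\^etement universel a un sens pr\'ecis. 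Votre argument devient correct une fois compl\'et\'e par cette r\'eduction aux CW-complexes ; sans elle, l'\'etape de rel\`evement peut \'echouer pour un foncteur $Q$ g\'en\'eral.
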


\begin{proof}
\newcommand\cw{\textrm{cw}}
Soit $G$ un \oo-groupoïde strict. Quitte à décomposer $G$ en une somme sur
ses composantes connexes, on peut supposer que $G$ a exactement une
composante connexe. Choisissons un objet de $G$ et appelons $G'$ le
sous-\oo-groupoïde $1$-réduit de $G$ déterminé par cet objet. Notons $X =
Q(G)$ et $X' = Q(G')$. L'inclusion $i : G' \to G$ induit des isomorphismes
sur les $\pi_n$
pour $n \ge 2$ et il en est donc de même de $Q(i) : X \to X'$. On notera
$X_\cw$ (resp. $X'_\cw$) un remplacement cellulaire de $X$ (resp. de $X'$),
c'est-à-dire un CW-complexe muni d'une fibration triviale $r : X_\cw \to X$
(resp. $r' : X'_\cw \to X'$).  Soit $\pi : \widetilde{X} \to X_\cw$ le
revêtement universel de $X_\cw$.  La simple connexité de $X'$ et donc de
$X'_\cw$ entraîne l'existence d'une application continue $X'_\cw \to
\widetilde{X}$ rendant le triangle
\[
\xymatrix@R=1pc{
& & \widetilde{X} \ar[d]^\pi  \\
& & X_\cw \ar[d]^r  \\
X'_\cw \ar[r]_{r'} \ar[rruu] & X' \ar[r]_{Q(i)} & X
}
\]
commutatif. Puisque $X'_\cw$ et $\widetilde{X}$ sont simplement connexes,
et que $Q(i)r'$ et $r\pi$ induisent des isomorphismes sur les $\pi_n$ pour $n
\ge 2$, l'application $X'_\cw \to \widetilde{X}$ est une équivalence faible.
D'où le résultat par le théorème \ref{thm:im_ess_sc}.
\end{proof}

\section{Types d'homotopie et $3$-groupoïdes quasi-stricts}

Pour tenter de contourner le caractère non essentiellement surjectif d'un
tel foncteur de réalisation de Simpson, Kapranov et Voedvosky ont affaibli la
notion de $n$\nobreakdash-groupoïde en demandant seulement l'existence d'inverses
«~faibles~» (voir \cite{KapVoe}). Simpson a démontré dans \cite{Simpson}
que cela ne suffit pas. Dans ce paragraphe, nous exposons un autre
argument. Nous utiliserons la notion de $n$-groupoïde présentée par Street
dans \cite{Street}. Celle-ci est équivalente par le corollaire 4.4 de
\cite{KMV} à celle de Kapranov et Voedvosky. Nous nous plaçons ici dans le
cas $n = \infty$ qui est développé dans \cite{LMW}. 

Soient $C$ une \oo-catégorie stricte et $x, y$ deux $n$-flèches de $C$ pour
$n \ge 0$. On définit par coïnduction mutuelle les deux notions suivantes :
\begin{itemize}
\item les $n$-flèches $x$ et $y$ sont \ndef{faiblement homotopes} s'il
  existe une $(n+1)$-flèche \ndef{faiblement inversible} $u : x \to y$ ;
\item une $(n+1)$-flèche $u : x \to y$ est \ndef{faiblement inversible} s'il
existe une $(n+1)$-flèche $v : y \to x$ telle que $uv$ et $vu$ soient
\ndef{faiblement homotopes} à des identités.
\end{itemize}
On dira qu'une \oo-catégorie stricte est un \ndef{\oo-groupoïde
quasi-strict} si toutes ses $n$-flèches pour $n \ge 1$ sont faiblement
inversibles.

Soient $G$ une \oo-catégorie stricte et $n$ un entier positif. On montre
(proposition 6 de~\cite{LMW}) que la relation de faible homotopie sur $C_n$
est une relation d'équivalence. Si de plus $G$ est un \oo-groupoïde
quasi-strict, alors par définition, deux $n$-flèches $u, v$ de $G_n$ sont
faiblement homotopes si et seulement si elles sont homotopes. On en déduit
que la relation d'homotopie est une relation d'équivalence sur $G_n$. On
peut donc définir l'ensemble $\pi_0(G)$ et les groupes $\pi_n(G, x)$ pour
$x$ un objet de $G$, et donc la notion d'\ndef{équivalence faible de
\oo-groupoïdes quasi-stricts}, de la même manière que dans le cas strict.

Soit $G$ un $3$-groupoïde quasi-strict $1$-réduit. Par définition, $G$ est
une $3$-catégorie stricte $1$-réduite telle que les $2$-flèches aient un
$\comp^2_1$-inverse à une $3$-flèche près, et que les $3$-flèches aient un
$\comp^3_2$-inverse. En désuspendant $G$, on obtient une catégorie monoïdale
symétrique~$C$, avec une symétrie et des contraintes triviales, qui est un
groupoïde, et telle que le monoïde $\pi_0(C)$ est un groupe. La catégorie
$C$ est un champ de Picard (sur le point) au sens de l'exposé XVIII de
\cite{SGA4}. Ainsi, en vertu du lemme 1.4.13 de \opcit, il existe une
catégorie monoïdale symétrique $D$, avec une symétrie et des contraintes
triviales, qui est un groupoïde et telle que le monoïde $D_0$ est un
groupe, ainsi qu'une équivalence faible monoïdale symétrique de $D$ vers
$C$. En suspendant, on obtient un $3$-groupoïde strict $H$ et un
\emph{pseudo}-foncteur $H \to G$ qui est une équivalence faible.

Ainsi, tout $3$-groupoïde quasi-strict simplement connexe est faiblement
équivalent à un $3$-groupoïde strict \forlang{via} un
\emph{pseudo}-foncteur. Les $3$-groupoïdes quasi-stricts simplement
connexes ne modélisent donc pas plus de types d'homotopie que les $3$-groupoïdes
stricts simplement connexes. En particulier, ils ne modélisent pas le
$3$-type associé à la sphère de dimension $2$ (comme le montre la
démonstration de la proposition \ref{prop:pas_prod_em}).

Il a par contre été démontré indépendamment par Leroy (\cite{Leroy3Types}),
Joyal et Tierney, et Berger (\cite{BergerDLS}) que les $3$-groupoïdes
faibles (au sens de \cite{GPSTricat}) modélisent bien les types d'homotopie.

\section{Deux questions}

Ce texte ne répond pas à la question suggérée par son titre : quels sont les
types d'homotopie modélisés par les \oo-groupoïdes stricts ? Plus
précisément, nous proposons la question suivante :

\begin{question}
Soit $Q$ un foncteur de réalisation de Simpson. Quelle est l'image essentielle
du foncteur $R : \wgpd \to \Hot$ ?
\end{question}

La proposition \ref{prop:img_ess_R} donne une condition nécessaire pour être
dans l'image. Cette condition nécessaire n'est probablement pas suffisante et
il s'agirait de dégager une condition plus fine, probablement en termes
d'action du groupe fondamental sur les groupes d'homotopie supérieurs.

Une deuxième question naturelle est la généralisation de la question
précédente aux \oo-groupoïdes quasi-stricts. En particulier, les
\oo-groupoïdes quasi-stricts modélisent-ils plus de types d'homotopie que
les \oo-groupoïdes stricts ? Plus précisément, en notant $\wgpdqs$ la
sous-catégorie pleine de $\wcat$ formée des \oo-groupoïdes quasi-stricts,
nous proposons la question suivante :

\begin{question}
Le foncteur $\Ho(\wgpd) \to \Ho(\wgpdqs)$, induit par le foncteur d'inclusion
$\wgpd \to \wgpdqs$, est-il une équivalence de catégories ? 
\end{question}

\bibliographystyle{smfplain}
\bibliography{biblio}

\end{document}